\documentclass[10pt,twocolumn,twoside]{IEEEtran}
\usepackage{caption}
\usepackage{bbm}
\usepackage{amsmath}
\usepackage{amsfonts}
\usepackage{mathrsfs}
\usepackage{amssymb}
\usepackage{multirow}
\usepackage{latexsym}
\usepackage{psfrag}
\usepackage{graphicx}
\usepackage{cite}
\usepackage{algorithm}
\usepackage{algpseudocode}
\usepackage{color}
\usepackage{times,balance}
\usepackage{algorithm,multirow,xcolor}
\usepackage{algorithmicx}
\usepackage{algpseudocode}
\usepackage{longtable}
\usepackage{booktabs}
\usepackage{colortbl}
\usepackage{bm}
\usepackage{booktabs}
\usepackage{amssymb}
\usepackage{pifont}
\usepackage{wasysym}
\usepackage{threeparttable}
\usepackage{comment}
\usepackage{subcaption}
\usepackage{enumitem}
\usepackage{soul}
\usepackage{cuted, caption}
\usepackage{graphicx}      
\usepackage{subcaption}   
\usepackage{float}
\DeclareMathOperator*{\argmin}{arg\,min}

\definecolor{mygray}{gray}{.8}

\newtheorem{theorem}{Theorem}
\newtheorem{definition}{Definition}

\newtheorem{assumption}{Assumption}

\newtheorem{lemma}{Lemma}
\newtheorem{remark}{Remark}

\allowdisplaybreaks[4]

\begin{document}

\title{Heterogeneous Stochastic Momentum ADMM for Distributed Nonconvex Composite Optimization}
\author{Yangming Zhang, Yongyang Xiong, Jinming Xu, Keyou You,  
and Yang Shi,  \IEEEmembership{Fellow, IEEE}
\thanks{This work was supported in part by the National Natural Science Foundation
of China (62203254).
\emph{(Corresponding author: Yongyang Xiong.)}}
\thanks{Y. Zhang and Y. Xiong are with the School of Intelligent Systems Engineering, Sun Yat-Sen University, Shenzhen 518107, P.R China. E-mail:  \texttt{xiongyy25@mail.sysu.edu.cn}.}
\thanks{
J. Xu is with the State Key Laboratory of Industrial Control Technology and the College of Control Science and Engineering, Zhejiang University, Hangzhou 310027, China. E-mail: \texttt{jimmyxu@zju.edu.cn}.}
\thanks{K. You is with the Department of Automation, Beijing National Research Center for Information Science and Technology, Tsinghua University, Beijing 100084, China. E-mail: \texttt{youky@tsinghua.edu.cn}.}
\thanks{Y. Shi is with the Department of Mechanical Engineering, University of Victoria, Victoria, BC V8W 2Y2, Canada. E-mail: \texttt{yshi@uvic.ca}.}
}
\maketitle

\begin{abstract}
This paper investigates the distributed stochastic nonconvex and nonsmooth composite optimization problem. Existing stochastic typically rely on uniform step size strictly bounded by global network parameters, such as the maximum node degree or spectral radius. This dependency creates a severe performance bottleneck, particularly in heterogeneous network topologies where the step size must be conservatively reduced to ensure stability. To overcome this limitation, we propose a novel Heterogeneous Stochastic Momentum Alternating Direction Method of Multipliers (HSM-ADMM). By integrating a recursive momentum estimator (STORM), HSM-ADMM achieves the optimal oracle complexity of $\mathcal{O}(\epsilon^{-1.5})$ to reach an $\epsilon$-stationary point, utilizing a strictly single-loop structure and an $\mathcal{O}(1)$ mini-batch size. The core innovation lies in a node-specific adaptive step-size strategy, which scales the proximal term according to local degree information. We theoretically demonstrate this design completely decouples the algorithmic stability from global network properties, enabling robust and accelerated convergence across arbitrary connected topologies without requiring any global structural knowledge. Furthermore, HSM-ADMM requires transmitting only a single primal variable per iteration, significantly reducing communication bandwidth compared to state-of-the-art gradient tracking algorithms. Extensive numerical experiments on distributed nonconvex learning tasks validate the superior efficiency of the proposed HSM-ADMM algorithm.
\end{abstract}

\begin{IEEEkeywords}
Distributed optimization, stochastic ADMM, momentum, nonconvex and nonsmooth optimization, heterogeneous step-size.
\end{IEEEkeywords}

\section{Introduction}

\IEEEPARstart{D}{istributed} optimization has emerged as a fundamental paradigm for large-scale machine learning and signal processing, motivated by the need to process massive datasets distributed across multi-agent networks \cite{Zhang_Discrete-Time, XiongTAC2023, ChenTII2024} and increasing concerns about data privacy \cite{XiongTCNS2020, WuTAC2025}. Unlike centralized algorithms that require data aggregation at a central server, distributed optimization enables a network of $n$ agents to collaboratively minimize a global objective using only local computation and peer-to-peer communication. This paradigm not only mitigates the risk of single‑point failures and privacy leakage, but also offers superior scalability for massive-scale applications \cite{YANG2019278}.

In this paper, we focus on the following distributed composite optimization problem over an undirected and connected network of $n$ agents:
\begin{equation}\label{Original Problem}
\min_{x \in \mathbb{R}^p}\sum_{i=1}^n{\left( f_i\left( x \right)  +h_i\left( x \right) \right)},
\end{equation}
where $f_{i}:\mathbb{R}^p\to\mathbb{R}$ denotes a closed and smooth local loss function (possibly nonconvex), and $h_{i}:\mathbb{R}^p\to\mathbb{R}\cup \left \{ +\infty \right \}$ represents a convex but nonsmooth regularizer (e.g., the $\ell_1$-norm for sparsity). Specifically, we consider the stochastic setting, i.e., $f_i(x) \triangleq \mathbb {E}_{\xi_i\sim \mathcal {D}_i}[f_i(x,\xi_{i})]$, where $\xi_i$ is a random data sample drawn from an unknown local distribution $\mathcal {D}_i$, and $f_i(x,\xi_{i})$ is the corresponding sample loss. In this setting, each agent $i$ only has access to the stochastic gradients of its local function $f_i$ and handles the nonsmooth regularizer $h_i$ via its proximal mapping, a standard premise in distributed and privacy-sensitive scenarios. This formulation encompasses a wide range of applications, including distributed machine learning \cite{Yan2023CompressedDP}, sensor network estimation \cite{ChengTSP2021, XU2024111863}, and cooperative control of vehicle swarms \cite{NiuTSMC2025}.

Existing distributed optimization algorithms broadly fall into two categories: primal-domain methods and primal-dual methods. In the primal domain, early works primarily rely on distributed (sub)gradient descent (D-SGD) \cite{nedic2009distributed} and its variants \cite{sundhar2010distributed, yuan2016convergence}. These algorithms face a fundamental step size trade-off: a constant step size enables fast convergence but only to a neighborhood of the optimal solution due to steady-state error \cite{nedic2009distributed}, \cite{yuan2016convergence}, whereas a diminishing step size guarantees exact convergence at slow sublinear rate \cite{sundhar2010distributed}. In stochastic setting, these algorithms are further limited by the inherent variance of stochastic gradients. Although variance reduction techniques such as SVRG \cite{SVRG2013} and SPIDER \cite{SPIDER2018} mitigate this issue, they impose prohibitive computational burdens due to periodic full-gradient evaluations or nested double-loop structures. Recently, momentum-based recursive variance reduction techniques (e.g., STORM \cite{STORM2019}) have achieved optimal convergence rates with single-loop updates and $\mathcal{O}(1)$ mini-batch sizes. However, extending these benefits to distributed nonconvex and nonsmooth composite optimization remains largely underexplored.

\begin{table*}[t]
\caption{Comparisons of different algorithms}
\label{tab:Comparisons of different algorithms}
\center
\begin{threeparttable}  
\renewcommand{\arraystretch}{1.6}
\begin{tabular}[l]{ccccc}
\toprule
Algorithm    & Objective Function                & Batch Size                                & Sample Complexity(per agent)         & Communication Complexity \\ 
\midrule
DSGT\cite{DSGT_XinRan}         &$F(\mathbf {x})$                   &$\mathcal{O}\left(1\right)$            &$\mathcal{O}\left(\max\left\{\frac{1}{n\varepsilon^2},\frac{\rho n}{(1-\rho)^3\varepsilon}\right\}\right)$                              &$\mathcal{O}\left(\max\left\{\frac{1}{n\varepsilon^2},\frac{\rho n}{(1-\rho)^3\varepsilon}\right\}\right)$                          \\
SPPDM\cite{Wang2021TSP_SPPDM}        &$F(\mathbf {x}) + H(\mathbf {x})$  &$\Omega\left(\frac{n}{\epsilon}\right)$          & $\mathcal{O}\left(\frac{1}{\left(1-\rho\right)^b\epsilon ^2}\right)$                          &$\mathcal{O}\left(\frac{1}{\epsilon }\right)$                          \\
ProxGT-SA \cite{xin2021stochastic}   &$F(\mathbf {x}) + H(\mathbf {x})$       &$\mathcal{O}\left(\frac{1}{\epsilon}\right)$            &$\mathcal{O}\left(\frac{1}{n\epsilon ^2}\right)$                             &$\mathcal{O}\left(\frac{\log(n)}{\epsilon} \right)$                          \\
ProxGT-SR-O \cite{xin2021stochastic} &$F(\mathbf {x}) + H(\mathbf {x})$                    &$\mathcal{O}\left(\frac{1}{\epsilon}\right)$            &$\mathcal{O}\left(\frac{1}{n\epsilon ^{1.5}}\right)$                              &$\mathcal{O}\left(\frac{\log(n)}{\epsilon} \right)$                          \\
DEEPSTORMv2 \cite{AAAIDEEPSTORM} &$F(\mathbf {x}) + H(\mathbf {x})$                    &$\mathcal{O}\left(1\right)$            &$\mathcal{O}\left(\frac{1}{\epsilon^{1.5}|\log(\epsilon)|^{1.5}} \right)$                              &$\mathcal{O}\left(\frac{1}{\epsilon^{1.5}|\log(\epsilon)|^{1.5}} \right)$                          \\
Prox-DASA \cite{TesiXiao2023Prox-DASA}   &$F(\mathbf {x}) + H(\mathbf {x})$                    &$\mathcal{O}\left(1\right)$            &$\mathcal{O}\left(\frac{1}{n\epsilon ^2}\right)$                              &$\mathcal{O}\left(\frac{1}{n\epsilon ^2}\right)$                          \\
Prox-DASA-GT \cite{TesiXiao2023Prox-DASA} &$F(\mathbf {x}) + H(\mathbf {x})$                    &$\mathcal{O}\left(1\right)$            &$\mathcal{O}\left(\frac{1}{n\epsilon ^2}\right)$                              &$\mathcal{O}\left(\frac{1}{n\epsilon ^2}\right)$                          \\ 
\midrule
This work     &$F(\mathbf {x}) + H(\mathbf {x})$                    &$\mathcal{O}\left(1\right)$            &$\tilde{\mathcal{O}}\left(\frac{1}{\epsilon ^{1.5}}\right)$                              &$\tilde{\mathcal{O}}\left(\frac{1}{\epsilon ^{1.5}}\right)$                          \\ 
\bottomrule
\end{tabular}
\vspace{2pt}  
{\footnotesize \item[*]$\rho$ quantifies the connectivity efficiency of the communication graph and $b$ is a constant.}
\end{threeparttable}
\end{table*}

To gracefully handle nonsmooth regularizers and network consensus constraints, dual-domain methods—particularly the alternating direction method of multipliers (ADMM) and its variants—are widely adopted \cite{boyd2011distributed}. However, the direct application of ADMM to large-scale distributed problems is computationally expensive, as updating the primal variables requires solving multiple implicit subproblems per iteration. To mitigate this computational burden, various approximate distributed ADMM variants have been proposed. For example, the distributed linearized ADMM (DLM) \cite{ling2015dlm} linearizes the objective function in the augmented Lagrangian function. Similarly, the penalty ADMM (PAD) \cite{zhang2021penalty} obtains an inexact solution by incorporating a quadratic penalty term into the objective function and using an approximation of the augmented Lagrangian function in the iteration process. Moreover, the distributed proximal ADMM (DP-ADMM) \cite{zhou2024distributed} solves the multiblock nonsmooth composite optimization problem and extends it to asynchronous scenario. Nevertheless,  \cite{ling2015dlm, zhang2021penalty, zhou2024distributed} all focus on convex settings and lack explicit convergence guarantees for nonconvex stochastic scenarios. While the recent PPG-ADMM \cite{zhou2025perturbed} relaxes stringent smoothness assumptions for nonconvex composite optimization, it does not provide an explicit convergence rate for the stochastic distributed case. Furthermore, the centralized SMADMM \cite{deng2025stochasticmomentumadmmnonconvex} achieves the optimal rate $\tilde{\mathcal{O}}(\epsilon^{-1.5})$ for the nonconvex stochastic composite optimization problem, yet extending it to a distributed consensus setting is nontrivial.

Among recent advances in distributed nonconvex optimization, the SPPDM algorithm \cite{Wang2021TSP_SPPDM} incorporates Nesterov momentum to accelerate convergence. ProxGT-SA and ProxGT-SR-O \cite{xin2021stochastic} incorporate stochastic gradient tracking and multi-consensus update in proximal gradient methods. However, a critical limitation of both SPPDM and ProxGT-SA is that their batch sizes are inextricably coupled with the target accuracy $\epsilon$. For high-precision solutions, the batch size becomes prohibitively large, leading to excessive computational demands and degraded generalization performance. DEEPSTORM \cite{AAAIDEEPSTORM} circumvents this by introducing a recursive momentum estimator. A fundamental limitation of DEEPSTORM and most existing distributed algorithms is that they employ a uniform step-size across all agents, which must be strictly bounded by global network properties (e.g., the spectral gap or the maximum node degree). This ``one-size-fits-all'' strategy creates a severe performance bottleneck, particularly in heterogeneous networks containing both highly connected ``hub'' nodes and sparsely connected ``leaf'' nodes. To ensure stability, the uniform step size must be conservatively reduced to accommodate the most restrictive node (typically, the node with the maximum degree), thereby forcing capable agents to update unnecessarily slowly and degrading overall convergence speed. Additionally, algorithms like \cite{xin2021stochastic, AAAIDEEPSTORM, TesiXiao2023Prox-DASA} require multiple variable exchanges per iteration, escalating the communication overhead in bandwidth-limited environments.

To simultaneously address the bottlenecks of topology-dependent step-sizes, large batch sizes, and heavy communication overhead, we propose a novel Heterogeneous Stochastic Momentum ADMM (HSM-ADMM) for distributed nonconvex composite optimization. By integrating ADMM with recursive momentum estimation, our algorithm handles both nonsmooth regularization and stochastic variance, achieving the optimal oracle complexity of $\tilde{\mathcal{O}}(\epsilon^{-1.5})$ with $\mathcal{O}(1)$ batch sizes. Crucially, we introduce a node-specific adaptive step-size mechanism scaled by each agent’s local degree. This design fundamentally decouples local updates from global topological constraints, effectively immunizing the algorithm against the straggler effect in heterogeneous networks. Table \ref{tab:Comparisons of different algorithms} summarizes the comparison between our algorithm and related state-of-the-art algorithms.

We summarize our main contributions as follows:
\begin{enumerate}
\item[(1)]\textbf{Single-loop Algorithm with Optimal Complexity:} We propose HSM-ADMM, a fully distributed single-loop algorithm for nonconvex and nonsmooth composite optimization. By leveraging a recursive momentum estimator, HSM-ADMM achieves the optimal oracle complexity of $\mathcal{O}(\epsilon^{-1.5})$ to reach an $\epsilon$-stationary point. This matches the theoretical lower bound for first-order stochastic nonconvex optimization, while requiring only $\mathcal{O}(1)$ mini-batch size per iteration without periodic full-gradient evaluations.

\item[(2)]\textbf{Topology-Independent Heterogeneous Step-sizes:} We design a novel heterogeneous step size strategy where each agent's step size is determined solely by its local degree $d_i$, independent of the global spectral radius or maximum degree. Theoretical analysis proves that this design eliminates the ``straggler effect'' caused by network bottlenecks, enabling faster convergence in heterogeneous and sparse topologies compared to uniform step-size algorithms.

\item[(3)]\textbf{Communication Efficiency:} Unlike state-of-the-art gradient tracking-based methods (e.g., ProxGT-SA, ProxGT-SR-O \cite{xin2021stochastic}, and DEEPSTORM \cite{AAAIDEEPSTORM}) that require transmitting two variables (model parameter and gradient tracker) per iteration, HSM-ADMM maintains the minimal communication cost of transmitting only one variable per iteration. This structural advantage effectively halves the communication bandwidth consumption per iteration, making our algorithm particularly suitable for communication-constrained environments.

\item[(4)]\textbf{Extensive Empirical Validation:} Experiments on various nonconvex learning tasks and network topologies demonstrate that our algorithm consistently outperforms state-of-the-art baselines in terms of both convergence speed and communication efficiency.
\end{enumerate}

The rest of this paper is organized as follows.
Section \ref{sec-II} introduces the preliminaries and problem formulation. Section \ref{sec-III} details the proposed HSM-ADMM algorithm. Section \ref{sec-IV} presents the convergence analysis.
Section \ref{sec-V} provides numerical experiments and, Section \ref{sec-VI} concludes the paper.

{\bf Notations.} $\|\cdot\|$ denotes the $\ell_2$-norm for vectors and the Frobenius norm for matrices. $\|\cdot\|_2$ denotes the spectral norm for matrices. For a positive definite matrix $Q$, $\| x \|_Q=\sqrt{x^{\mathrm{T} }Qx} $
is the norm of the vector $x$ with respect to $Q$. $I$ and $\mathbf{0}$ denote an identity matrix and a zero matrix, respectively. The operator $\otimes$ denotes the Kronecker product. Each agent $i$ holds a local variable $x_i \in \mathbb{R}^p$, whose value in the $k$-th iteration is denoted by $x_i^k$. We denote ${\mathbf {x}}=[x_1^{\mathrm{T} },\ldots,x_n^{\mathrm{T} }]^{\mathrm{T} } \in \mathbb {R}^{n p}$, $F({\bf x})=\sum_{i=1}^n{  f_i\left( x \right) }$, and $H({\bf x})=\sum_{i=1}^n{  h_i\left( x \right) }$. The gradient of $F$ at ${\bf x}$ is denoted by
\begin{equation*}
\nabla F({\bf x})= [(\nabla f_1(x_1))^{\mathrm{T} },\ldots,(\nabla f_n(x_n))^{\mathrm{T} }]^{\mathrm{T} },
\end{equation*}
where $\nabla f_i(x_i)$ is the gradient of $f_i$ at $x_i$. We define the following proximal operator of  $h_i$
\begin{align*} 
{\rm prox}_{h_i}^c(x) \triangleq \argmin_u \left\lbrace h_i(u) + \frac{1}{2c} \Vert u-x \Vert ^2 \right\rbrace,
\end{align*}
where $c>0$ is a scalar parameter.
\section{Preliminaries and Problem Formulation}\label{sec-II}
\subsection{Graph Theory}
In this paper, we consider an undirected and connected graph $\mathcal{G}({\mathcal{V}},\mathcal{E})$ (i.e., there exists a communication path between any pair of nodes), where $\mathcal{V}=\{1,\dots,n\}$ is the set of nodes and $\mathcal{E}$ is the set of edges with $m = |\mathcal{E}|$. Let $\mathcal{N}_{i}\triangleq\{j \in \mathcal{V}:(j,i)\in\mathcal{E}\}\cup \{i\}$ denote the neighbors of node $i$ that include itself, and let $d_i = |\mathcal{N}_i| - 1$ be its degree. The node-edge incidence matrix $M \in \mathbb{R}^{m \times n}$ of the graph $\mathcal{G}$ is defined by assigning an arbitrary direction $i \to j$ to each $k$-th edge $e_k = (i, j)$; specifically, $M_{ki} = 1$, $M_{kj} = -1$, and all other entries in the $k$-th row are zero. The graph Laplacian matrix is given by $\mathbf{L}_G = M^{\mathrm{T}}M$.

\subsection{Problem Formulation}
We consider a networked system where $n$ agents collaboratively solve the distributed stochastic composite optimization problem \eqref{Original Problem}. Unlike static empirical risk minimization, we assume data samples arrive sequentially or are continuously drawn from an underlying distribution. Specifically, each local objective $f_i$ is defined as the expectation over a random variable $\xi_i$:
\begin{equation}\label{Problem_fi}
f_i(x) \triangleq \mathbb{E}_{\xi_i \sim \mathcal{D}_i}[f_i(x, \xi_i)],
\end{equation}
where $\xi_i$ follows an unknown probability distribution $\mathcal{D}_i$, and $f_i(x, \xi_i)$ is the instantaneous cost function for a sample $\xi_i$. This formulation effectively captures online scenarios where samples are generated in real-time, requiring agents to rely on stochastic approximations of $\nabla f_i(x)$ for optimization.

To solve this problem in a fully distributed manner via the ADMM framework, we introduce a local copy $x_i \in \mathbb{R}^p$ for each agent $i \in \mathcal{V}$, and employ auxiliary variables $y_i \in \mathbb{R}^p$ to decouple the nonsmooth regularization term $h_i(y_i)$ from the consensus constraints. The original problem \eqref{Original Problem} is thus equivalently reformulated as:
\begin{align}\label{Distributed_Problem}
\min_{\{x_i\}, \{y_i\}} \ & \sum_{i=1}^n \mathbb{E}_{\xi_i \sim \mathcal{D}_i}[f_i(x_i, \xi_i)] + \sum_{i=1}^n h_i(y_i) \notag \\
\text{s.t.} \ & x_i = x_j, \quad \forall (i,j) \in \mathcal{E} \notag \\
& x_i = y_i, \quad \forall i \in \mathcal{V}.
\end{align}
To facilitate algorithm design and theoretical analysis, we rewrite \eqref{Distributed_Problem} into a compact matrix form:
\begin{align}\label{compact_problem}
    \min_{\mathbf{x}, \mathbf{y}} \ & F(\mathbf{x}) + H(\mathbf{y}) \notag \\
    \text{s.t.} \ & A\mathbf{x} + B\mathbf{y} = \mathbf{0},
\end{align}
where $\mathbf{x} = [x_1^{\mathrm{T}}, \dots, x_n^{\mathrm{T}}]^{\mathrm{T}} \in \mathbb{R}^{np}$ and $\mathbf{y} = [y_1^{\mathrm{T}}, \dots, y_n^{\mathrm{T}}]^{\mathrm{T}} \in \mathbb{R}^{np}$. The global functions are defined as $F(\mathbf{x}) \triangleq \sum_{i=1}^n f_i(x_i)$ and $H(\mathbf{y}) \triangleq \sum_{i=1}^n h_i(y_i)$. The constraint matrices $A$ and $B$ are constructed as:
\begin{align}\label{eq:matrices_AB_def}
A &\triangleq \begin{bmatrix} M \otimes I_p \\ I_{np} \end{bmatrix} \in \mathbb{R}^{(m+n)p \times np}, \\
B &\triangleq \begin{bmatrix} \mathbf{0}_{mp \times np} \\ -I_{np} \end{bmatrix} \in \mathbb{R}^{(m+n)p \times np}.
\end{align}

The specific structure of $A$ imparts desirable spectral properties that are instrumental for our convergence analysis. Specifically, consider the matrix $A^{\mathrm{T}} A$:
\begin{equation}
A^{\mathrm{T}} A = (M^{\mathrm{T}} M \otimes I_p) + I_{np} = \mathbf{L}_G \otimes I_p + I_{np}.
\end{equation}
Let $0 = \lambda_1(\mathbf{L}_G) < \lambda_2(\mathbf{L}_G) \le \dots \le \lambda_n(\mathbf{L}_G)$ denote the eigenvalues of the Laplacian for the connected graph. The eigenvalues of $A^{\mathrm{T}} A$ are given by $\lambda_i(A^{\mathrm{T}} A) = \lambda_i(\mathbf{L}_G) + 1$. 
Since the Laplacian of an undirected graph is positive semi-definite with $\lambda_{\min}(\mathbf{L}_G) = 0$, we have the following critical identity:
\begin{equation}\label{eq:sigma_A_identity}
    \sigma_{\min}^2(A) = \lambda_{\min}(A^{\mathrm{T}} A) = \lambda_{\min}(\mathbf{L}_G) + 1 = 1.
\end{equation}
This implies that $A$ has full column rank and its smallest singular value is exactly 1, independent of the network topology.
\subsection{Assumptions}
To establish the theoretical guarantees of our algorithm, we adopt the following standard assumptions regarding the communication graph, the objective functions, and the stochastic oracle\cite{xin2021stochastic, AAAIDEEPSTORM}.
\begin{assumption}[Network Topology]\label{Ass: undirected and connected}
The communication graph $\mathcal{G}$ is undirected and connected.
\end{assumption}

\begin{assumption}[Smoothness]\label{Ass: mean-squared smoothness}
For all $i \in \mathcal{V}$, the local function $f_i(x)$ is mean-squared smooth with constant $L > 0$. That is, for any $x, y \in \mathbb{R}^p$:
\begin{equation*}
\mathbb{E}_{\xi_i}[\|\nabla f_i(x, \xi_i) - \nabla f_i(y, \xi_i)\|^2] \le L^2 \|x - y\|^2.
\end{equation*}
Note that this implies the standard $L$-smoothness of the expected function $f_i(x) \triangleq \mathbb{E}[f_i(x, \xi_i)]$.
\end{assumption}

\begin{assumption}[Lower Boundedness]\label{Ass: f,g lower bounded}
The global objective function is bounded from below, i.e., $ \inf_x \sum_{i=1}^n (f_i(x) + h_i(x)) > -\infty$.
\end{assumption}

\begin{assumption}[Stochastic Oracle]\label{Ass: unbiasedness and variance boundedness}
For each agent $i$, the stochastic gradient $\nabla f_i(x, \xi_i)$ is an unbiased estimator of the true gradient $\nabla f_i(x)$ with bounded variance:
\begin{align*}
&\mathbb{E}_{\xi_i}[\nabla f_i(x, \xi_i)] = \nabla f_i(x),\\
&\mathbb{E}_{\xi_i}[\|\nabla f_i(x, \xi_i) - \nabla f_i(x)\|^2] \le \sigma^2,
\end{align*}
where $\xi_i \sim \mathcal{D}_i$ represents the local data samples.
\end{assumption}

A critical observation regarding the aforementioned assumptions is the deliberate omission of the bounded data heterogeneity condition. Unlike a wide range of existing methods that require the spatial variance of local gradients to be strictly bounded (e.g., assuming $\frac{1}{n} \sum_{i=1}^n \|\nabla f_i(x) - \nabla F(x)\|^2 \le \zeta^2$)\cite{lian2017can}, our convergence analysis is completely independent of such topological data divergence $\zeta^2$. Consequently, HSM-ADMM is theoretically guaranteed to resist arbitrary degrees of data heterogeneity, making it highly practical for strictly Non-IID (Non-Independent and Identically Distributed‌) distributed learning problems.

\section{Algorithm Development}\label{sec-III}
In this section, we develop the proposed Heterogeneous Stochastic Momentum ADMM (HSM-ADMM) distributed algorithm. We first introduce the augmented Lagrangian function associated with the compact formulation \eqref{compact_problem}. Subsequently, we detail the construction of the stochastic momentum estimator and the adaptive primal-dual update rules. Finally, we present the fully distributed implementation of the proposed HSM-ADMM algorithm.
\subsection{Augmented Lagrangian Function}
To handle the constraints in \eqref{Distributed_Problem}, we introduce the Lagrange multipliers $\alpha_{ij}\in \mathbb{R}^p$ associated with the consensus constraints $x_i = x_j$ and Lagrange multipliers $\beta_i\in \mathbb{R}^p$ associated with the variable splitting constraints $x_i = y_i$. Grouping the multipliers $\alpha_{ij}$ in the vector $\boldsymbol{\alpha}=[\alpha_{ij}]_{(i,j)\in \mathcal {E}}\in \mathbb{R}^{mp}$ and the multipliers $\beta_i$ in the vector $\boldsymbol{\beta}=[\beta_{ij}]_{(i,j)\in \mathcal {E}}\in \mathbb{R}^{np}$, We concatenate $\boldsymbol{\alpha}$ and $\boldsymbol{\beta}$ in the multiplier $\boldsymbol{\lambda}=[\boldsymbol{\alpha};\boldsymbol{\beta}]\in \mathbb{R}^{(m+n)p}$, which is therefore associated with the constraint $A\mathbf {x}+B\mathbf {y}= 0$. Thus, the Lagrangian function of \eqref{compact_problem} is given by
\begin{equation*}
    \mathcal{L}(\mathbf{x}, \mathbf{y}, \boldsymbol{\lambda}) \triangleq F(\mathbf{x}) + H(\mathbf{y}) - \langle \boldsymbol{\lambda}, A\mathbf{x} + B\mathbf{y} \rangle .
\end{equation*}
Simultaneously, We define the augmented Lagrangian function of \eqref{compact_problem} as
\begin{align} \label{AL_fun_compact}
    \mathcal{L}_{\rho}(\mathbf{x}, \mathbf{y}, \boldsymbol{\lambda}) &\triangleq F(\mathbf{x}) + H(\mathbf{y}) - \langle \boldsymbol{\lambda}, A\mathbf{x} + B\mathbf{y} \rangle \notag \\
    &\quad + \frac{\rho}{2} \|A\mathbf{x} + B\mathbf{y}\|^2,
\end{align}
where $\rho >0$ is a penalty parameter, which will be appropriately chosen in the subsequent analysis.
\subsection{Stochastic Recursive Momentum}
Directly optimizing \eqref{AL_fun_compact} via standard stochastic gradient algorithms leads to high variance and a sublinear convergence rate. To accelerate convergence, we introduce a stochastic gradient estimator $\mathbf{v}^k$ using the momentum technique\cite{STORM2019} to track the gradient $\nabla F(\mathbf{x})$
\begin{align} \label{update v compact}
\mathbf{v}^{k+1} = \nabla F(\mathbf{x}^{k+1}, \xi^{k+1}) + (1 - a^{k+1}) (\mathbf{v}^k - \nabla F(\mathbf{x}^k, \xi^{k+1})).
\end{align}
Specifically, for each agent $i$, the local estimator $v_i^k$ is updated as follows:
\begin{align}\label{vi_update}
v_i^{k+1} &=(1-a^{k+1})(v_i^k-\nabla f_i(x_i^k,\xi_i^{k+1}))\notag\\&\quad + \nabla f_i(x_i^{k+1},\xi_{i}^{k+1}),
\end{align}
where $a^{k+1}\in(0,1]$ is the momentum parameter, and $\xi_i^{k+1}$ is the data sample drawn at iteration $k+1$. This estimator reduces the variance of the stochastic gradient approximation, which is critical for nonconvex optimization.

\subsection{Primal-Dual Updates}
At iteration $k$, HSM-ADMM updates the variables in the order $\mathbf{y}^{k+1}$, $\mathbf{x}^{k+1}$, and $\boldsymbol{\lambda}^{k+1}$.

\textit{Update of} $\mathbf {y}$: We update the auxiliary variable $\mathbf{y}$ by minimizing the augmented Lagrangian function $\mathcal{L}_{\rho^k}$ with respect to $\mathbf{y}$ while fixing $\mathbf{x}^k$ and $\boldsymbol{\lambda}^k$. Omitting terms independent of $\mathbf{y}$, the problem reduces to:
\begin{align}\label{y_update_compact}
    \mathbf{y}^{k+1} &= \argmin_{\mathbf{y}} \left\{ H(\mathbf{y}) - \langle \boldsymbol{\lambda}^k, B\mathbf{y} \rangle + \frac{\rho^k}{2} \|A\mathbf{x}^k + B\mathbf{y}\|^2 \right\} \notag \\
    &=\argmin_{\mathbf{y}} \left\{ H(\mathbf{y}) + \langle \boldsymbol{\beta}^k, \mathbf{y} \rangle + \frac{\rho^k}{2} \|\mathbf{x}^k - \mathbf{y}\|^2 \right\} \notag \\
    &=\argmin_{\mathbf{y}} \left\{ H(\mathbf{y}) + \frac{\rho^k}{2} \left\| \mathbf{y} - \left( \mathbf{x}^k - \frac{\boldsymbol{\beta}^k}{\rho^k} \right) \right\|^2 \right\} \notag \\
    &= {\rm prox}_H^\frac{1}{\rho^k} \left( \mathbf{x}^k - \frac{\boldsymbol{\beta}^k}{\rho^k} \right).
\end{align}
Due to the separability of $H(\mathbf{y})$ and the diagonal structure of the constraints, this global update decomposes into $n$ independent local updates:
\begin{align}\label{yi_update}
    y_i^{k+1} = \mathrm{prox}_{h_i}^{\frac{1}{\rho^k}} \left( x_i^k - \frac{\beta_i^k}{\rho^k} \right).
\end{align}
This step requires no communication with neighbors, as it depends only on local variables $(x_i^k, \beta_i^k)$ and the local function $h_i$.

\textit{Update of} $\mathbf {x}$: With $\mathbf{y}^{k+1}$ available, we now minimize the augmented Lagrangian function $\mathcal{L}_{\rho^k}(\mathbf{x}, \mathbf{y}^{k+1}, {\boldsymbol{\lambda}}^k)$, given by
\begin{align}
\mathcal{L}_{\rho^k}(\mathbf{x}, \mathbf{y}^{k+1}, {\boldsymbol{\lambda}}^k) &= F(\mathbf{x}) + H(\mathbf{y}^{k+1}) - \langle {\boldsymbol{\lambda}}^k,A\mathbf{x} + B\mathbf{y}^{k+1} \rangle \notag \\
&\quad + \frac{\rho^k}{2}\| A\mathbf{x} + B\mathbf{y}^{k+1} \|^2 .   
\end{align}

However, directly minimizing $\mathcal{L}_{\rho^k}$ with respect to $\mathbf{x}$ is computationally expensive  due to the nonconvexity of $F(\mathbf{x})$ and the coupling in the quadratic penalty term $\|A\mathbf{x} + B\mathbf{y}^{k+1}\|^2$. To obtain an efficient update, we construct a surrogate function $\hat{\mathcal{L}}_{\rho^k}$ by linearizing the smooth part $F(\mathbf{x})$ and the quadratic penalty term $\frac{\rho^k}{2}\| A\mathbf{x} + B\mathbf{y}^{k+1} \|^2$ at $\mathbf{x}^k$, and adding a proximal term $\frac{1}{2}\|\mathbf{x} - \mathbf{x}^k\|_{Q^k}^2$ parameterized by the adaptive step-size matrix $Q^k$ to ensure robust convergence:
\begin{align}\label{surrogate function}
\hat{\mathcal{L}}_{\rho^k}(\mathbf{x}, \mathbf{y}^{k+1}, {\boldsymbol{\lambda}}^k) &= F(\mathbf{x}^k) +\langle \mathbf{v}^k,\mathbf{x} -\mathbf{x}^{k} \rangle +  \frac{1}{2}\|\mathbf{x}-\mathbf{x}^k\|_{Q^k}^2 \notag \\
&\quad + H(\mathbf{y}^{k+1}) - \langle {\boldsymbol{\lambda}}^k,A\mathbf{x} + B\mathbf{y}^{k+1} \rangle \notag \\
&\quad + \langle \rho^kA^{\mathrm{T}}(A\mathbf{x}^k + B\mathbf{y}^{k+1}),\mathbf{x}-\mathbf{x}^k \rangle \notag \\
&\quad + \frac{\rho^k}{2}\| A\mathbf{x}^k + B\mathbf{y}^{k+1} \|^2.
\end{align}
Here, $\mathbf{v}^k$ serves as a stochastic approximation of $\nabla F(\mathbf{x}^k)$.

The proximal matrix $Q^k$ is crucial for stability. A standard choice is $Q^k = \eta^k I$, which requires $\eta^k$ to be large enough to cover the spectral radius of the graph Laplacian (i.e., $\eta^k \propto d_{\max}$). This forces the entire network to update at the pace of the bottleneck node. To overcome this, we propose a heterogeneous adaptive step-size strategy:
\begin{align*}
    Q^k &= \mathrm{diag}(\eta_1^k I_p, \dots, \eta_n^k I_p), \\
    \eta_i^k &= c_\eta (d_i + 1) k^{1/3},
\end{align*}
where $c_\eta > 0$ is a topology-independent constant. We can also express $Q^k$ as
\begin{align*}
Q^k = k^{1/3} \underbrace{\text{diag}\left( c_\eta(d_1+1)I_p, \dots, c_\eta(d_n+1)I_p \right)}_{\triangleq C_\eta }.
\end{align*}
By scaling  $\eta_i^k$ with the local degree $d_i$, we effectively counterbalance the scaling effect of the Laplacian term in the gradient, allowing each agent to update with a step size proportional to its local connectivity. This design decouples the algorithm's stability from the global maximum degree, enabling faster convergence in heterogeneous networks.

Ignoring terms independent of $\mathbf{x}$, problem \eqref{surrogate function} reduces to
\begin{align}\label{eq:X_min}
\mathbf{x}^{k+1} &= \argmin_{\mathbf{x}} \biggl( \langle \mathbf{v}^k - A^\mathrm{T} {\boldsymbol{\lambda}}^k + \rho^k A^\mathrm{T} (A\mathbf{x}^k + B\mathbf{y}^{k+1}),\mathbf{x} \rangle \notag \\
&\quad + \frac{1}{2}\|\mathbf{x}-\mathbf{x}^k\|_{Q^k}^2\biggr).    
\end{align}
Using the first-order optimality condition of \eqref{eq:X_min}, we have
\begin{align} \label{x_optimality}
    \mathbf{v}^k - A^\mathrm{T} \boldsymbol{\lambda}^k + \rho^k A^\mathrm{T} (A\mathbf{x}^k + B\mathbf{y}^{k+1}) + Q^k (\mathbf{x}^{k+1} - \mathbf{x}^k) = 0,
\end{align}
thus,  $\mathbf{x}^{k+1}$ is updated as
\begin{align} \label{x_update_compact}
    \mathbf{x}^{k+1} = \mathbf{x}^k - (Q^k)^{-1} \left( \mathbf{v}^k - A^\mathrm{T} \boldsymbol{\lambda}^k + \rho^k A^\mathrm{T}(A\mathbf{x}^k + B\mathbf{y}^{k+1}) \right).
\end{align}
Extracting the $i$-th component from \eqref{x_update_compact}, we obtain the corresponding local update for each agent $i$:
\begin{align}\label{xi_update}
x_{i}^{k+1} &=x_{i}^k - \frac{1}{\eta_i^k} 
\Bigl( v_i^k + \sum_{j \in \mathcal{N}_i} [-\alpha_{ij}^k + \rho^k(x_i^k - x_j^k)] \notag \\
&\quad -\beta_i^k + \rho^k(x_i^k - y_i^{k+1}) \Bigr).
\end{align}
Obviously, this step involves communication only with immediate neighbors $j \in \mathcal{N}_i$.

\textit{Update of} $\bm{\lambda}$: The dual variables $\bm{\lambda}=[\boldsymbol{\alpha};\boldsymbol{\beta}]$ are updated via gradient ascent on the augmented Lagrangian function:
\begin{align} \label{update lambda compact}
\bm{\lambda}^{k+1}= \bm{\lambda}^{k} -\rho^k \left ( A\mathbf {x}^{k+1} +B\mathbf {y}^{k+1}\right ).
\end{align}

\begin{algorithm}[t]\label{DSMADMM}
\caption{\textbf{HSM-ADMM} ~~--- from the view of agent $i$}
\begin{algorithmic}[1]
\vspace{0.1cm}
\State \textbf{Initialization:}$x_i^0,y_i^0,\alpha_{ij}^0,\beta_i^0$.
\vspace{0.1cm}
\State \textbf{Parameters:} Set $\rho^k = c_\rho k^{1/3}$, $a^k = c_a k^{-2/3}$, and $\eta_i^k = c_\eta (d_i + 1) k^{1/3}$.
\vspace{0.1cm}
\State{let $v_i^0=\frac{1}{m^0}\sum\limits_{s=1}^{m^0}\nabla f_i(x_i^0,\xi_{i,s}^{0})$.}
\vspace{0.1cm}
\For{$k = 0, \ldots, K-1$}
\vspace{0.1cm}
\State{Update $y_i^{k+1}$ and $x_i^{k+1}$:}
\State{$\quad y_i^{k+1}={\rm prox}_{h_i}^{\frac{1}{\rho^k}} \left( x_i^k - \frac{\beta_i^k}{\rho^k} \right)$.}
\vspace{0.1cm}
\State{$\quad x_i^{k+1}=x_{i}^k - \frac{1}{\eta_i^k} 
\bigl ( v_i^k + \sum_{j \in \mathcal{N}_i} [-\alpha_{ij}^k + \rho^k(x_i^k - x_j^k)] $}
\Statex{$\quad \quad \quad \quad \quad \quad -\beta_i^k + \rho^k(x_i^k - y_i^{k+1}) \bigr)$.} 
\vspace{0.1cm}
\State{Receive $x_j^{k+1}$ from neighbors $j\in \mathcal{N}_{i}$.}
\vspace{0.1cm}
\State{Update $\alpha_{ij}^{k+1}$ and $\beta_i^{k+1}$:}
\State{$\quad \alpha_{ij}^{k+1}=\alpha_{ij}^{k}-\rho^k\left( x_i^{k+1}-x_j^{k+1}\right)$.}
\vspace{0.1cm}
\State{$\quad \beta_i^{k+1}=\beta_i^{k}-\rho^k\left( x_i^{k+1}-y_i^{k+1}\right)$.}
\vspace{0.1cm}
\State Sample $\xi_i^{k+1}\in \mathcal{D}_i^k$ and let
\vspace{0.1cm}
\Statex{$\quad \quad \quad \quad v_i^{k+1} = (1-a^{k+1})(v_i^k-\nabla f_i(x_i^k,\xi_i^{k+1}))$}
\vspace{0.1cm}
\Statex{$\quad \quad \quad \quad \quad \quad \quad + \nabla f_i(x_i^{k+1},\xi_{i}^{k+1})$.} 
\vspace{0.1cm}
\EndFor
\end{algorithmic}
\end{algorithm}
We summarize the full procedure of the proposed HSM-ADMM in Algorithm 1. Before proceeding to the rigorous theoretical analysis, we highlight three appealing features that distinguish our algorithm from existing works from an implementation perspective:

\indent (1) \textbf{Topology-Aware Adaptability:} The most distinguishing feature of HSM-ADMM lies in the heterogeneous adaptive step-size design. Unlike existing distributed algorithms (e.g., SPPDM \cite{Wang2021TSP_SPPDM}, DEEPSTORM \cite{AAAIDEEPSTORM}) that enforce a uniform step-size $\eta^k$ strictly bounded by the global maximum degree $d_{\max}$ or the network spectral radius, HSM-ADMM assigns a specific step-size $\eta_i^k = c_\eta(d_i+1)k^{1/3}$ to each agent $i$. This design successfully decouples the local update pace from global network bottlenecks, allowing agents in sparsely connected regions to take larger, more aggressive steps.

\indent (2) \textbf{Communication Efficiency:} Communication overhead is often the primary bottleneck in distributed systems. State-of-the-art gradient tracking methods (e.g., ProxGT-SA \cite{xin2021stochastic}) require transmitting both the primal variable $x_i$ and an auxiliary gradient tracking variable $s_i$ at each iteration. In contrast, HSM-ADMM requires agents to broadcast only their primal variable $x_i^{k+1}$ to their immediate neighbors. This structural advantage effectively halves the bandwidth consumption per iteration.

\indent (3) \textbf{Computational and Memory Efficiency:} HSM-ADMM operates in a strictly single-loop manner. It evaluates only a single stochastic gradient (or a mini-batch of size $\mathcal{O}(1)$) per iteration to update the momentum estimator $v_i^{k+1}$ \eqref{vi_update}, bypassing the need for periodic full-gradient evaluations required by SVRG/SPIDER-based algorithms. Furthermore, the $y_i$-update \eqref{yi_update} solely relies on a standard proximal operator, and the $x_i$-update \eqref{xi_update} is highly parallelizable and computationally light. The memory requirement per agent is also minimal, as each agent only needs to store its local variables $(x_i, y_i, \beta_i, v_i)$ and the dual variables $\alpha_{ij}$ corresponding to its incident edges.

\section{Convergence Analysis}\label{sec-IV}
In this section, we establish the theoretical convergence guarantees for the proposed HSM-ADMM. We first introduce the optimality measure used to quantify the algorithm's performance. Next, we provide several key supporting lemmas. Building upon these foundations, we finally derive the explicit global convergence rate of our algorithm.

\subsection{Optimality Condition and Stationarity}
The convergence of the algorithm is evaluated based on the Karush-Kuhn-Tucker (KKT) conditions. A point $(\mathbf{x}^\star, \mathbf{y}^\star, \boldsymbol{\lambda}^\star)$ is deemed a stationary point if $\mathbf{0} \in \partial \mathcal{L}(\mathbf{x}^\star, \mathbf{y}^\star, \boldsymbol{\lambda}^\star)$. Given the stochastic nature of our setting, we adopt the expected stationarity gap as the standard convergence metric.

\begin{definition}[$\epsilon$-stationary Point]\label{def:epsilon_stationary_point}
For any given $\epsilon > 0$, a point $(\mathbf{x}^\star, \mathbf{y}^\star, \boldsymbol{\lambda}^\star)$ is said to be an $\epsilon$-stationary point of problem \eqref{compact_problem} if it satisfies:
\begin{align}\label{eq:stationary_def}
\mathbb{E}\left[ \mathrm{dist}^2\left( 0, \partial \mathcal{L}(\mathbf{x}^\star, \mathbf{y}^\star, \boldsymbol{\lambda}^\star) \right) \right] \leq \epsilon,
\end{align}
where $\mathrm{dist}(0, \mathcal{S}) \triangleq \min_{\mathbf{g} \in \mathcal{S}} \|\mathbf{g}\|$, and the subdifferential of the Lagrangian function is given by:
\begin{align}\label{subdifferential_of_Lagrangian}
\partial \mathcal{L}\left( \mathbf {x},\mathbf {y},\bm{\lambda} \right) &=\left[ \begin{array}{c}
	\nabla _\mathbf {x}\mathcal{L}\left( \mathbf {x},\mathbf {y},\bm{\lambda} \right)\\
	\partial _\mathbf {y}\mathcal{L}\left( \mathbf {x},\mathbf {y},\bm{\lambda} \right)\\
	\nabla _{\bm{\lambda}}\mathcal{L}\left( \mathbf {x},\mathbf {y},\bm{\lambda} \right)\\
\end{array} \right] \notag
\\&=\left[ \begin{array}{c}
	\nabla F\left( \mathbf {x} \right)  -A^{\mathrm{T}}\bm{\lambda}\\
	\partial H\left( \mathbf {y} \right)-B^{\mathrm{T}}\bm{\lambda}\\
	A\mathbf {x}+B\mathbf {y}\\
\end{array} \right]. 
\end{align}
\end{definition}
\subsection{Key Lemmas}
First, we establish an upper bound on the difference between consecutive dual variables, which is crucial for handling the coupling introduced by the network constraints. 
\begin{lemma}\label{Upper bound of dual variable error}
Suppose that Assumptions \ref{Ass: undirected and connected}, \ref{Ass: mean-squared smoothness}, \ref{Ass: f,g lower bounded}, \ref{Ass: unbiasedness and variance boundedness} hold. Let  $\left ( \mathbf {x}^k,\mathbf {y}^k,\bm{\lambda}^k \right ) $ be the sequence generated by Algorithm 1. For any constant $\theta > 0$, the squared norm of the dual variable difference is bounded by:
\begin{align}
&\|\Delta {\boldsymbol{\lambda}}^{k+1}\|^2 \notag\\
\le&   (1+\theta) (\Delta \mathbf{x}^{k+1})^\mathrm{T} (S^k)^\mathrm{T} S^k (\Delta \mathbf{x}^{k+1}) \notag \\
+& \left ( 2(1+\frac{1}{\theta})\|S^{k-1}\|_2^2 + 4L^2(1+\frac{1}{\theta}) \right ) \|\Delta \mathbf{x}^k\|^2 \notag \\
+& 8(1+\frac{1}{\theta})\|\mathcal{E}^k \|^2+8(1+\frac{1}{\theta})\| \mathcal{E}^{k-1}\|^2,
\end{align}
where $\Delta \boldsymbol{\lambda}^{k+1} \triangleq \boldsymbol{\lambda}^{k+1} - \boldsymbol{\lambda}^k$, $\Delta \mathbf{x}^{k+1} \triangleq \mathbf{x}^{k+1} - \mathbf{x}^{k}$, the auxiliary matrix is defined as $S^k \triangleq Q^k - \rho^k A^\mathrm{T} A$, and $\mathcal{E}^k \triangleq \mathbf{v}^k - \nabla F(\mathbf{x}^k)$ is the gradient estimation error. Note that we have utilized the topological property $\sigma_{\min}^2(A)=1$ to simplify the bound.
\end{lemma}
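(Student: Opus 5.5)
The plan is to turn the $\mathbf{x}$-optimality condition \eqref{x_optimality} into a closed-form expression for $A^{\mathrm T}\boldsymbol{\lambda}^{k+1}$. I would then difference two consecutive iterations and convert the bound on $\|A^{\mathrm T}\Delta\boldsymbol{\lambda}^{k+1}\|$ into a bound on $\|\Delta\boldsymbol{\lambda}^{k+1}\|$ using $\sigma_{\min}^2(A)=1$ from \eqref{eq:sigma_A_identity}.

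\textbf{Step 1: a formula for $A^{\mathrm T}\boldsymbol{\lambda}^{k+1}$.} Multiplying the dual update \eqref{update lambda compact} by $A^{\mathrm T}$ gives
\begin{align*}
A^{\mathrm T}\boldsymbol{\lambda}^k = A^{\mathrm T}\boldsymbol{\lambda}^{k+1}+\rho^k A^{\mathrm T}(A\mathbf{x}^{k+1}+B\mathbf{y}^{k+1}).
\end{align*}
Substituting this into \eqref{x_optimality}, the $B\mathbf{y}^{k+1}$ terms cancel and the penalty terms combine into $-\rho^kA^{\mathrm T}A\,\Delta\mathbf{x}^{k+1}$. This leaves
\begin{align*}
A^{\mathrm T}\boldsymbol{\lambda}^{k+1}=\mathbf{v}^k+(Q^k-\rho^kA^{\mathrm T}A)\Delta\mathbf{x}^{k+1}=\mathbf{v}^k+S^k\Delta\mathbf{x}^{k+1}.
\end{align*}
The same identity holds at iteration $k-1$. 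Subtracting the two gives
\begin{align*}
A^{\mathrm T}\Delta\boldsymbol{\lambda}^{k+1}=S^k\Delta\mathbf{x}^{k+1}-S^{k-1}\Delta\mathbf{x}^k+(\mathbf{v}^k-\mathbf{v}^{k-1}).
\end{align*}

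\textbf{Step 2: splitting the terms.} I apply $\|a+b\|^2\le(1+\theta)\|a\|^2+(1+\tfrac1\theta)\|b\|^2$ with $a=S^k\Delta\mathbf{x}^{k+1}$. The $a$-term gives exactly the quadratic form $(1+\theta)(\Delta\mathbf{x}^{k+1})^{\mathrm T}(S^k)^{\mathrm T}S^k\Delta\mathbf{x}^{k+1}$. For $b$ I use $\|b\|^2\le 2\|S^{k-1}\|_2^2\|\Delta\mathbf{x}^k\|^2+2\|\mathbf{v}^k-\mathbf{v}^{k-1}\|^2$. Next I write
\begin{align*}
\mathbf{v}^k-\mathbf{v}^{k-1}=\mathcal{E}^k-\mathcal{E}^{k-1}+\nabla F(\mathbf{x}^k)-\nabla F(\mathbf{x}^{k-1}).
\end{align*}
By Assumption \ref{Ass: mean-squared smoothness}, $\nabla F$ is $L$-Lipschitz, so
\begin{align*}
\|\mathbf{v}^k-\mathbf{v}^{k-1}\|^2\le 2L^2\|\Delta\mathbf{x}^k\|^2+4\|\mathcal{E}^k\|^2+4\|\mathcal{E}^{k-1}\|^2.
\end{align*}
Multiplying through by $2(1+\tfrac1\theta)$ reproduces exactly the constants $2\|S^{k-1}\|_2^2+4L^2$, $8$ and $8$ in the statement.

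\textbf{Step 3: from $A^{\mathrm T}\Delta\boldsymbol{\lambda}$ back to $\Delta\boldsymbol{\lambda}$ (main obstacle).} This step needs $\|\Delta\boldsymbol{\lambda}^{k+1}\|^2\le\sigma_{\min}^{-2}(A)\|A^{\mathrm T}\Delta\boldsymbol{\lambda}^{k+1}\|^2=\|A^{\mathrm T}\Delta\boldsymbol{\lambda}^{k+1}\|^2$. That inequality holds only when $\Delta\boldsymbol{\lambda}^{k+1}\in\mathrm{Range}(A)$; full column rank of $A$ alone is not enough, since $A$ is tall.

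By \eqref{update lambda compact}, $\Delta\boldsymbol{\lambda}^{k+1}=-\rho^k[(M\otimes I_p)\mathbf{x}^{k+1};\,\mathbf{x}^{k+1}-\mathbf{y}^{k+1}]$. This vector lies in $\mathrm{Range}(A)=\{[(M\otimes I_p)\mathbf{z};\mathbf{z}]\}$ only if $(M\otimes I_p)\mathbf{y}^{k+1}=0$, which fails in general.

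So I would first check whether the intended argument works with $\Delta\boldsymbol{\lambda}^{k+1}$ replaced by its projection onto $\mathrm{Range}(A)$, i.e.\ whether only $A^{\mathrm T}\Delta\boldsymbol{\lambda}$ enters the later descent analysis. Failing that, I would look for an extra structural fact, such as a suitable initialization of $\boldsymbol{\lambda}^0$ or an invariant, that keeps the dual increments in $\mathrm{Range}(A)$. If neither is available, the statement should be read as a bound on $\|A^{\mathrm T}\Delta\boldsymbol{\lambda}^{k+1}\|^2$, for which Steps 1 and 2 already constitute a complete proof.
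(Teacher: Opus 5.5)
Your Steps 1 and 2 are exactly the paper's argument: the same identity $A^{\mathrm T}\boldsymbol{\lambda}^{k+1}=\mathbf{v}^k+S^k\Delta\mathbf{x}^{k+1}$, the same splitting of $A^{\mathrm T}\Delta\boldsymbol{\lambda}^{k+1}$, and the same constants $2\|S^{k-1}\|_2^2+4L^2$, $8$, $8$. The paper does your Step 3 in one line. It invokes $\|A^{\mathrm T}u\|^2\ge\sigma_{\min}^2(A)\|u\|^2$ ``for any vector $u$ in the range of $A$'' and never checks that $\Delta\boldsymbol{\lambda}^{k+1}$ lies in that range. Your objection is correct and locates the hole in the paper's own proof. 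Because $A$ is $(m+n)p\times np$, full column rank gives $\|A\mathbf{z}\|\ge\|\mathbf{z}\|$, not $\|A^{\mathrm T}u\|\ge\|u\|$. And, as you say, $\Delta\boldsymbol{\lambda}^{k+1}=-\rho^k\mathbf{r}^{k+1}\in\mathrm{Range}(A)$ iff $(M\otimes I_p)\mathbf{y}^{k+1}=\mathbf{0}$.

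Neither of your rescues can work, because the inequality is false as stated. Here is a counterexample.
\begin{itemize}
\item Take $n=2$, $p=1$, one edge with $M=[1,\,-1]$, and $f_1=f_2\equiv 0$. Then $\mathbf{v}^k\equiv\mathbf{0}$, $\mathcal{E}^k\equiv\mathbf{0}$, and all four assumptions hold.
\item Let $h_i(y)=c|y-y_i^\star|$ with $\mathbf{y}^\star=(3,-3)$.
\item Initialize at some iteration $k-1\ge 1$ with $\mathbf{x}^{k-1}=(1,-1)$ and $\boldsymbol{\lambda}^{k-1}=\mathbf{0}$. (The literal $k=0$ step is undefined, since $\rho^0=\eta_i^0=0$.) Choose $c\ge 2(\rho^{k-1}+\rho^k)$.
\end{itemize}
Then $(\mathbf{L}_G+I)\mathbf{x}^{k-1}=\mathbf{y}^\star$, and the two prox arguments are at componentwise distance $2$ and $2+2\rho^{k-1}/\rho^k$ from $\mathbf{y}^\star$. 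Both are within the thresholds $c/\rho^{k-1}$ and $c/\rho^k$, so $\mathbf{y}^k=\mathbf{y}^{k+1}=\mathbf{y}^\star$. Since $A^{\mathrm T}(A\mathbf{x}^{k-1}+B\mathbf{y}^\star)=(\mathbf{L}_G+I)\mathbf{x}^{k-1}-\mathbf{y}^\star=\mathbf{0}$, the first $\mathbf{x}$-step gives $\Delta\mathbf{x}^k=\mathbf{0}$. This in turn gives $A^{\mathrm T}\boldsymbol{\lambda}^k=\mathbf{0}$, and then the second $\mathbf{x}$-step gives $\Delta\mathbf{x}^{k+1}=\mathbf{0}$. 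The right-hand side of the lemma is therefore $0$. Yet $\Delta\boldsymbol{\lambda}^{k+1}=-\rho^k(2,-2,2)^{\mathrm T}\neq\mathbf{0}$, a nonzero vector in $\mathrm{Null}(A^{\mathrm T})$.

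This shows why each rescue fails.
\begin{itemize}
\item Initialization: the counterexample already uses the natural choice $\boldsymbol{\lambda}=\mathbf{0}$. Membership in $\mathrm{Range}(A)$ would require every $\mathbf{y}^{k+1}$ to be consensual, which no initialization or invariant enforces.
\item Projection onto $\mathrm{Range}(A)$: this does not help downstream. Term (d) in the proof of Lemma~\ref{Single-Step Lyapunov Descent Lemma} needs the full quantity $\rho^k\|\mathbf{r}^{k+1}\|^2=\|\Delta\boldsymbol{\lambda}^{k+1}\|^2/\rho^k$.
\end{itemize}
The structural reason is $\mathrm{Range}(B)\not\subseteq\mathrm{Range}(A)$. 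The component of $\mathbf{r}^{k+1}$ in $\mathrm{Null}(A^{\mathrm T})$ is invisible to the $\mathbf{x}$-optimality condition, and the nonsmooth $\mathbf{y}$-step gives no Lipschitz control over it. Your fallback reading is therefore the right conclusion. Steps 1--2 are a complete proof of the stated bound with $\|A^{\mathrm T}\Delta\boldsymbol{\lambda}^{k+1}\|^2$ on the left, and that is all that can be proved.
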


\textit{Proof:} See Appendix~\ref{appendix: proof of Upper bound of dual variable error}.
\begin{remark}
We deliberately retain the matrix quadratic form $(\Delta \mathbf{x}^{k+1})^\mathrm{T} (S^k)^\mathrm{T} S^k (\Delta \mathbf{x}^{k+1})$ for the current iteration instead of relaxing it to a scalar norm (e.g., $\|S^k\|_2^2$). This tight characterization allows the adaptive step-size matrix $Q^k$ to locally neutralize the topology-induced ill-conditioning in the subsequent descent analysis. 
\end{remark}

Next, we characterize the variance reduction property of the recursive momentum estimator.
\begin{lemma}\label{Recursive Bound on Gradient Estimation Error}
Under Assumptions \ref{Ass: mean-squared smoothness}, the estimation error of the STORM estimator satisfies the following ecursive bound:
\begin{align}
\mathbb{E}[\|\mathcal{E}^{k+1}\|^2] \le& (1 - a^{k+1})^2 \mathbb{E}[\|\mathcal{E}^k\|^2] + 2 ( a^{k+1} )^2 \sigma^2 \notag \\&+ 2L^2  (1- a^{k+1})^2 \mathbb{E}[\|\Delta \mathbf{x}^{k+1}\|^2],
\end{align}
where $\sigma^2$ is the bound on the stochastic gradient variance.
\end{lemma}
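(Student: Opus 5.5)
The plan is the standard STORM error decomposition, done coordinatewise and then stacked over agents. Let $\mathcal{F}^k$ denote the $\sigma$-algebra generated by all randomness up to iteration $k$. Since $\mathbf{x}^{k+1}$ is computed from $\mathbf{v}^k,\boldsymbol{\lambda}^k,\mathbf{y}^{k+1},\mathbf{x}^k$, it is $\mathcal{F}^k$-measurable, and the fresh samples $\xi^{k+1}$ are independent of $\mathcal{F}^k$.

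First, subtract $\nabla F(\mathbf{x}^{k+1})$ from the update \eqref{update v compact}. Adding and subtracting $(1-a^{k+1})\nabla F(\mathbf{x}^k)$ gives
\begin{align*}
\mathcal{E}^{k+1} = (1-a^{k+1})\mathcal{E}^k + a^{k+1}\mathbf{u}^{k+1} + (1-a^{k+1})\mathbf{w}^{k+1},
\end{align*}
with the two noise terms
\begin{align*}
\mathbf{u}^{k+1} &\triangleq \nabla F(\mathbf{x}^{k+1},\xi^{k+1})-\nabla F(\mathbf{x}^{k+1}),\\
\mathbf{w}^{k+1} &\triangleq \bigl(\nabla F(\mathbf{x}^{k+1},\xi^{k+1})-\nabla F(\mathbf{x}^{k},\xi^{k+1})\bigr)-\bigl(\nabla F(\mathbf{x}^{k+1})-\nabla F(\mathbf{x}^{k})\bigr).
\end{align*}
Both $\mathbf{u}^{k+1}$ and $\mathbf{w}^{k+1}$ have zero conditional mean given $\mathcal{F}^k$ by Assumption \ref{Ass: unbiasedness and variance boundedness}. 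The term $\mathcal{E}^k$ is $\mathcal{F}^k$-measurable, so the cross terms between $\mathcal{E}^k$ and the noise vanish after taking conditional expectation. This yields
\begin{align*}
\mathbb{E}\bigl[\|\mathcal{E}^{k+1}\|^2\mid\mathcal{F}^k\bigr] = (1-a^{k+1})^2\|\mathcal{E}^k\|^2 + \mathbb{E}\bigl[\|a^{k+1}\mathbf{u}^{k+1}+(1-a^{k+1})\mathbf{w}^{k+1}\|^2\mid\mathcal{F}^k\bigr].
\end{align*}

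Second, bound the last term by Young's inequality $\|a+b\|^2\le 2\|a\|^2+2\|b\|^2$, which produces the factors of $2$ in the statement. The variance part is handled by Assumption \ref{Ass: unbiasedness and variance boundedness} applied agentwise: $\mathbb{E}\|\mathbf{u}^{k+1}\|^2\le \sigma^2$, where $\sigma^2$ is read as the aggregated variance bound (otherwise a factor $n$ appears and is absorbed into $\sigma^2$). For $\mathbf{w}^{k+1}$, the variance of a random vector is at most its second moment, so by the mean-squared smoothness of Assumption \ref{Ass: mean-squared smoothness}, summed over agents,
\begin{align*}
\mathbb{E}\|\mathbf{w}^{k+1}\|^2 \le \mathbb{E}\|\nabla F(\mathbf{x}^{k+1},\xi^{k+1})-\nabla F(\mathbf{x}^k,\xi^{k+1})\|^2 \le L^2\|\Delta\mathbf{x}^{k+1}\|^2.
\end{align*}
Taking total expectation then gives the claimed recursion.

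The only step needing real care is the measurability argument: it must be that $\Delta\mathbf{x}^{k+1}$ is determined before $\xi^{k+1}$ is drawn. This holds by the order of operations in Algorithm 1, where sampling is the last step of each iteration, and it is exactly what permits the conditional Jensen/variance bound with $\|\Delta\mathbf{x}^{k+1}\|^2$ treated as deterministic given $\mathcal{F}^k$.
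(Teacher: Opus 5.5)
Your proposal is correct and follows the paper's proof essentially step by step. The paper uses the same decomposition of $\mathcal{E}^{k+1}$ into $(1-a^{k+1})\mathcal{E}^k$ plus two zero-mean noise terms: its $T_2$ and $T_3$ are your $a^{k+1}\mathbf{u}^{k+1}$ and $(1-a^{k+1})\mathbf{w}^{k+1}$. It then drops the cross term by conditioning on $\mathcal{F}^k$, applies $\|a+b\|^2\le 2\|a\|^2+2\|b\|^2$, and bounds the variance by the second moment together with mean-squared smoothness, exactly as you do. Your side remark is also accurate: since Assumption \ref{Ass: unbiasedness and variance boundedness} is stated per agent, the stacked noise term is bounded by $n\sigma^2$ rather than $\sigma^2$. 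The paper omits this factor without comment; it only changes a constant and does not affect the rate.
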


\textit{Proof:} See Appendix~\ref{appendix: Recursive Bound on Gradient Estimation Error}.
\subsection{Lyapunov Function and Descent Property}
To establish the global convergence, we construct the Lyapunov function as follows:
\begin{align}\label{eq: Lyapunov function}
\Phi^k &\triangleq  \mathcal{L}_{\rho^{k-1}}(\mathbf{x}^k, \mathbf{y}^k, {\boldsymbol{\lambda}}^k) + \frac{1}{\Gamma^k} \|\mathcal{E}^k\|^2 \notag \\&\quad+ \frac{C_{err}}{\rho^{k-1} }\|\mathcal{E}^{k-1}\|^2 + \frac{\beta^k}{2} \|\Delta \mathbf{x}^k\|^2,
\end{align}
where the parameters $\Gamma^{k}$, $C_{err}$ and $\beta^k$ satisfy
\begin{align}\label{Gamma^k}
\frac{1}{\Gamma^{k}}  =c_\gamma k^{1/3},    
\end{align}
\begin{align}\label{C_err}
C_{err} \ge 12(1+\frac{1}{\theta}),
\end{align}
\begin{align}\label{beta^k}
\beta^k \ge \left(\frac{6\left(1+\frac{1}{\theta}\right)  \| C_\eta  - c_\rho A^\mathrm{T}A \|^2 }{c_{\rho} }+\frac{ 12L^2\left(1+\frac{1}{\theta}\right)}{c_{\rho} } \right) k^{\frac{1}{3}}.
\end{align}
\begin{lemma}\label{Single-Step Lyapunov Descent Lemma}
Suppose Assumptions \ref{Ass: undirected and connected}, \ref{Ass: mean-squared smoothness}, \ref{Ass: f,g lower bounded}, \ref{Ass: unbiasedness and variance boundedness} hold. Let $\{\mathbf{x}^k, \mathbf{y}^k, \boldsymbol{\lambda}^k\}$ be generated by Algorithm 1 with the adaptive step-size matrix $Q^k$. The sequence $\{\Phi^k\}$ satisfies the following single-step recursive inequality:
\begin{align}\label{Single-Step Lyapunov Descent}
    &\Phi^{k+1} - \Phi^k \notag\\
    \le & - (\Delta \mathbf{x}^{k+1})^{\mathrm{T}} C_{\mathbf{x}}^k (\Delta \mathbf{x}^{k+1}) - C_{\mathcal{E}}^k \|\mathcal{E}^k\|^2 \notag \\
    & - \frac{\rho^k}{2} \|\mathbf{r}^{k+1}\|^2 + \frac{\rho^k - \rho^{k-1}}{2} \|\mathbf{r}^k\|^2  + \frac{2(a^{k+1})^2 \sigma^2}{\Gamma^{k+1}},
\end{align}
where 
\begin{align*}
C_{\mathbf{x}}^k &= \left( Q^k - \frac{\rho^k}{2} A^\mathrm{T} A \right) - \frac{3(1+\theta)}{2\rho^k } (S^k)^\mathrm{T} S^k \\
&\quad - \left( \frac{\mu^k}{2} + \frac{\beta_{k+1}}{2} + \frac{L}{2} + \frac{2L^2(1-a^{k+1})^2}{\Gamma^{k+1}} \right) I,\\
C_{\mathcal{E}}^k &= \frac{1}{\Gamma^k} -\frac{(1-a^{k+1})^2}{\Gamma^{k+1}}  - \frac{1}{2\mu^k} - \frac{12(1+\frac{1}{\theta})}{\rho^k } - \frac{C_{err}}{\rho^k},\\
\mathbf{r}^{k} &= A\mathbf{x}^{k} + B\mathbf{y}^{k}.
\end{align*}
\end{lemma}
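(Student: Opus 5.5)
The plan is to split $\Phi^{k+1}-\Phi^k$ into the change of the augmented Lagrangian and the changes of the three auxiliary terms, bound each separately, and add them up. For the Lagrangian we use the telescoping split
\begin{align*}
\mathcal{L}_{\rho^k}(\mathbf{x}^{k+1},\mathbf{y}^{k+1},\boldsymbol{\lambda}^{k+1})-\mathcal{L}_{\rho^{k-1}}(\mathbf{x}^{k},\mathbf{y}^{k},\boldsymbol{\lambda}^{k})
=&\,[\mathcal{L}_{\rho^k}(\cdot^{k+1},\boldsymbol{\lambda}^{k+1})-\mathcal{L}_{\rho^k}(\cdot^{k+1},\boldsymbol{\lambda}^{k})]\\
&+[\mathcal{L}_{\rho^k}(\mathbf{x}^{k+1},\mathbf{y}^{k+1},\boldsymbol{\lambda}^k)-\mathcal{L}_{\rho^k}(\mathbf{x}^{k},\mathbf{y}^{k+1},\boldsymbol{\lambda}^k)]\\
&+[\mathcal{L}_{\rho^k}(\mathbf{x}^{k},\mathbf{y}^{k+1},\boldsymbol{\lambda}^k)-\mathcal{L}_{\rho^k}(\mathbf{x}^{k},\mathbf{y}^{k},\boldsymbol{\lambda}^k)]\\
&+[\mathcal{L}_{\rho^k}-\mathcal{L}_{\rho^{k-1}}](\mathbf{x}^k,\mathbf{y}^k,\boldsymbol{\lambda}^k).
\end{align*}

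\emph{Penalty-change term.} The last bracket equals $\frac{\rho^k-\rho^{k-1}}{2}\|\mathbf{r}^k\|^2$ exactly.

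\emph{$\mathbf{y}$-step.} Since $\mathbf{y}^{k+1}$ exactly minimizes $\mathcal{L}_{\rho^k}(\mathbf{x}^k,\cdot,\boldsymbol{\lambda}^k)$, the third bracket is $\le 0$.

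\emph{Dual step.} By \eqref{update lambda compact}, the first bracket equals $\frac{1}{\rho^k}\|\Delta\boldsymbol{\lambda}^{k+1}\|^2$. I will split it as $-\frac{\rho^k}{2}\|\mathbf{r}^{k+1}\|^2+\frac{3}{2\rho^k}\|\Delta\boldsymbol{\lambda}^{k+1}\|^2$, using $\|\Delta\boldsymbol{\lambda}^{k+1}\|^2=(\rho^k)^2\|\mathbf{r}^{k+1}\|^2$. This produces the $-\frac{\rho^k}{2}\|\mathbf{r}^{k+1}\|^2$ term in the statement. The remaining $\frac{3}{2\rho^k}\|\Delta\boldsymbol{\lambda}^{k+1}\|^2$ is then bounded with Lemma~\ref{Upper bound of dual variable error}, which gives:
\begin{itemize}
\item the term $\frac{3(1+\theta)}{2\rho^k}(S^k)^{\mathrm T}S^k$ in $C_{\mathbf{x}}^k$;
\item error terms $\frac{12(1+1/\theta)}{\rho^k}(\|\mathcal{E}^k\|^2+\|\mathcal{E}^{k-1}\|^2)$;
\item a term $\frac{3}{\rho^k}\big((1+\tfrac1\theta)\|S^{k-1}\|_2^2+2L^2(1+\tfrac1\theta)\big)\|\Delta\mathbf{x}^k\|^2$.
\end{itemize}

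\emph{$\mathbf{x}$-step.} I will use $L$-smoothness of $F$ and exact quadratic expansion of the penalty term around $\mathbf{x}^k$:
\[
\mathcal{L}_{\rho^k}(\mathbf{x}^{k+1})-\mathcal{L}_{\rho^k}(\mathbf{x}^k)\le\langle\nabla F(\mathbf{x}^k)-A^{\mathrm T}\boldsymbol{\lambda}^k+\rho^kA^{\mathrm T}(A\mathbf{x}^k+B\mathbf{y}^{k+1}),\Delta\mathbf{x}^{k+1}\rangle+\tfrac{L}{2}\|\Delta\mathbf{x}^{k+1}\|^2+\tfrac{\rho^k}{2}\|A\Delta\mathbf{x}^{k+1}\|^2 .
\]
Substituting the optimality condition \eqref{x_optimality} replaces the linear term by $\langle-\mathcal{E}^k-Q^k\Delta\mathbf{x}^{k+1},\Delta\mathbf{x}^{k+1}\rangle$. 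Young's inequality with parameter $\mu^k$ bounds the cross term by $\frac{1}{2\mu^k}\|\mathcal{E}^k\|^2+\frac{\mu^k}{2}\|\Delta\mathbf{x}^{k+1}\|^2$. This yields $-(\Delta\mathbf{x}^{k+1})^{\mathrm T}(Q^k-\frac{\rho^k}{2}A^{\mathrm T}A-\frac{L+\mu^k}{2}I)\Delta\mathbf{x}^{k+1}$, matching the first and third pieces of $C_{\mathbf{x}}^k$. (The sign convention in the statement treats $\rho^kA^{\mathrm T}A$ loosely; I will follow it.)

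\emph{Auxiliary terms.} For $\frac{1}{\Gamma}\|\mathcal{E}\|^2$, I take expectations and apply Lemma~\ref{Recursive Bound on Gradient Estimation Error}. This gives $\frac{(1-a^{k+1})^2}{\Gamma^{k+1}}\|\mathcal{E}^k\|^2-\frac{1}{\Gamma^k}\|\mathcal{E}^k\|^2+\frac{2(a^{k+1})^2\sigma^2}{\Gamma^{k+1}}+\frac{2L^2(1-a^{k+1})^2}{\Gamma^{k+1}}\|\Delta\mathbf{x}^{k+1}\|^2$. For the $\frac{C_{err}}{\rho}$ term, the difference is $\frac{C_{err}}{\rho^k}\|\mathcal{E}^k\|^2-\frac{C_{err}}{\rho^{k-1}}\|\mathcal{E}^{k-1}\|^2$. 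Since $C_{err}\ge 12(1+1/\theta)$ and $\rho^k\ge\rho^{k-1}$, the negative part absorbs the $\frac{12(1+1/\theta)}{\rho^k}\|\mathcal{E}^{k-1}\|^2$ coming from the dual bound, and the positive part goes into $C_{\mathcal{E}}^k$. Likewise, the $\beta$ term contributes $\frac{\beta^{k+1}}{2}\|\Delta\mathbf{x}^{k+1}\|^2-\frac{\beta^k}{2}\|\Delta\mathbf{x}^k\|^2$. Choice \eqref{beta^k} ensures that $\frac{\beta^k}{2}$ dominates the $\|\Delta\mathbf{x}^k\|^2$ coefficient from the dual bound, because $S^{k-1}=(k-1)^{1/3}(C_\eta-c_\rho A^{\mathrm T}A)$ and $\rho^k=c_\rho k^{1/3}$. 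Collecting all coefficients then gives exactly $C_{\mathbf{x}}^k$ and $C_{\mathcal{E}}^k$.

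\emph{Main obstacle.} The delicate step is this last absorption of the $\|\Delta\mathbf{x}^k\|^2$ and $\|\mathcal{E}^{k-1}\|^2$ terms. It requires comparing $\|S^{k-1}\|_2^2/\rho^k$ with $\beta^k$ under the $k^{1/3}$ schedules, using $(k-1)^{2/3}/k^{1/3}\le k^{1/3}$ and $\rho^{k-1}\le\rho^k$. The index-$0$ conventions also need care, e.g.\ defining $\rho^{-1}$ and $S^{-1}$. Everything is stated in expectation, conditioning on the past so that Lemma~\ref{Recursive Bound on Gradient Estimation Error} applies.
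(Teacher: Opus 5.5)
Your proposal is correct and is essentially the paper's proof. It uses the same four-way split of the augmented-Lagrangian change. It handles the $\mathbf{x}$-step the same way ($L$-smoothness, the optimality condition \eqref{x_optimality}, Young with $\mu^k$). It treats the dual step the same way, rewriting it as $-\frac{\rho^k}{2}\|\mathbf{r}^{k+1}\|^2+\frac{3}{2\rho^k}\|\Delta\boldsymbol{\lambda}^{k+1}\|^2$ and bounding it with Lemma~\ref{Upper bound of dual variable error}. It then applies Lemma~\ref{Recursive Bound on Gradient Estimation Error} and absorbs the $\|\Delta\mathbf{x}^k\|^2$ and $\|\mathcal{E}^{k-1}\|^2$ terms through the conditions on $\beta^k$ and $C_{err}$.

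Two of your additions are genuine refinements the paper skips. Working in conditional expectation is necessary, because Lemma~\ref{Recursive Bound on Gradient Estimation Error} only holds in expectation. Your $k=0$ caveat is real, since $\rho^0=0$ and $Q^0=\mathbf{0}$. Your remark about a ``loose'' sign on $\rho^kA^{\mathrm T}A$ is unnecessary: your own expansion yields $Q^k-\frac{\rho^k}{2}A^{\mathrm T}A$ exactly.
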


\textit{Proof:} See Appendix~\ref{appendix: Single-Step Lyapunov Descent Lemma}.
\subsection{Global Convergence Rate}
Based on the descent property established above, we now provide the global convergence guarantees.
\begin{theorem}\label{potential function accumulation}
It follows from Assumption \ref{Ass: f,g lower bounded} that there exists a universal lower bound $\Phi^{\star}$ for the sequence $\{\Phi^k\}$. Summing \eqref{Single-Step Lyapunov Descent} from $k=1$ to $K$ yields the following finite accumulation bound:
\begin{align}\label{eq:accumulation bound}
&\sum_{k=1}^K \left (  k^{-1/3} \|\mathcal{E}^k\|^2 + k^{1/3} \|\Delta \mathbf{x}^{k+1}\|^2 + k^{1/3} \|\mathbf{r}^{k+1}\|^2 \right ) \notag \\ &\le \frac{\Phi^{1} - \Phi^{\star} + \frac{\rho^1 -\rho^0}{2}\|\mathbf{r}^1\|^2+ 2\sigma^2c_a^2c_\gamma\left (1+ \ln{K} \right )}{\min \left (C_{\mathcal{E}},\lambda_{\min}(C_\mathbf{x}),\frac{c_\rho}{4} \right )  },
\end{align}
where the strictly positive constants are defined as:
\begin{align*}
C_{\mathcal{E}}&= 2c_a c_\gamma  - \frac{1}{2c_\mu} - \frac{12(1+\frac{1}{\theta})}{c_\rho } - \frac{C_{err}}{c_\rho}, \\
C_\mathbf{x} &= \left( C_\eta - \frac{c_\rho}{2} A^\mathrm{T} A \right) \\&\quad - \frac{3(1+\theta)}{2c_\rho } (C_\eta - c_\rho A^\mathrm{T} A)^\mathrm{T} (C_\eta - c_\rho A^\mathrm{T} A)\\
&\quad - \left( \frac{c_\mu}{2} + \frac{c_\beta}{2} + \frac{L}{2} +2L^2 c_\gamma \right) I.
\end{align*}
\end{theorem}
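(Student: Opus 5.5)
The plan is to telescope the single-step inequality \eqref{Single-Step Lyapunov Descent} of Lemma~\ref{Single-Step Lyapunov Descent Lemma} over $k=1,\dots,K$. Before doing so, each coefficient on the right-hand side will be replaced by a quantity of the form (positive constant)$\times k^{\pm 1/3}$. We substitute $Q^k=k^{1/3}C_\eta$, $\rho^k=c_\rho k^{1/3}$, $a^{k+1}=c_a(k+1)^{-2/3}$, $1/\Gamma^k=c_\gamma k^{1/3}$, and take $\mu^k=c_\mu k^{1/3}$ and $\beta_{k+1}=c_\beta k^{1/3}$, with $c_\beta$ large enough to satisfy \eqref{beta^k}.

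\textbf{The $\Delta\mathbf{x}$ coefficient.} Here $S^k=k^{1/3}(C_\eta-c_\rho A^{\mathrm T}A)$, so
\begin{align*}
\frac{3(1+\theta)}{2\rho^k}(S^k)^{\mathrm T}S^k=k^{1/3}\,\frac{3(1+\theta)}{2c_\rho}(C_\eta-c_\rho A^{\mathrm T}A)^{\mathrm T}(C_\eta-c_\rho A^{\mathrm T}A).
\end{align*}
Every term of $C_{\mathbf{x}}^k$ therefore scales like $k^{1/3}$, except $L/2$ and the momentum term $2L^2(1-a^{k+1})^2c_\gamma(k+1)^{1/3}\le 2L^2c_\gamma\,2^{1/3}k^{1/3}$. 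The small factor $2^{1/3}$ is absorbed either into the constant or by $L/2\le (L/2)k^{1/3}$. This gives $C_{\mathbf{x}}^k\succeq k^{1/3}C_{\mathbf{x}}$, and hence
\begin{align*}
-(\Delta\mathbf{x}^{k+1})^{\mathrm T}C_{\mathbf{x}}^k\Delta\mathbf{x}^{k+1}\le -\lambda_{\min}(C_{\mathbf{x}})\,k^{1/3}\|\Delta\mathbf{x}^{k+1}\|^2.
\end{align*}
Positivity of $C_{\mathbf{x}}$ is where the heterogeneous step size enters. $C_\eta$ is diagonal with entries $c_\eta(d_i+1)$, and $A^{\mathrm T}A=\mathbf{L}_G\otimes I+I$ satisfies $\mathbf{L}_G\preceq 2\,\mathrm{diag}(d_i)$ by the Gershgorin/row-sum bound. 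Choosing $c_\eta$ large relative to $c_\rho$ then makes $C_\eta-\tfrac{c_\rho}{2}A^{\mathrm T}A$ dominate both the quadratic penalty and the constant shifts. I take this positivity as part of the parameter choice asserted in the statement.

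\textbf{The $\mathcal{E}$ coefficient.} This is the main obstacle. We need
\begin{align*}
\frac{1}{\Gamma^k}-\frac{(1-a^{k+1})^2}{\Gamma^{k+1}}\ge 2c_ac_\gamma k^{-1/3}-(\text{lower order}).
\end{align*}
Write $c_\gamma[k^{1/3}-(k+1)^{1/3}]+c_\gamma(k+1)^{1/3}\,[1-(1-a^{k+1})^2]$. The first bracket is $\ge -\tfrac{1}{3}k^{-2/3}$ by concavity. The second is $\ge c_\gamma(k+1)^{1/3}(2a^{k+1}-(a^{k+1})^2)\approx 2c_ac_\gamma(k+1)^{-1/3}$. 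The $-k^{-2/3}$ and $(a^{k+1})^2$ residues are of strictly smaller order and are absorbed by slightly enlarging the constants, or by tacitly using $c_a\ge 1$. The remaining terms $1/(2\mu^k)$, $12(1+1/\theta)/\rho^k$ and $C_{err}/\rho^k$ are exactly $k^{-1/3}$ times the constants listed in $C_{\mathcal{E}}$. This yields $-C_{\mathcal{E}}^k\|\mathcal{E}^k\|^2\le -C_{\mathcal{E}}k^{-1/3}\|\mathcal{E}^k\|^2$. Care with $(k+1)^{-1/3}$ versus $k^{-1/3}$ (a factor of at most $2^{-1/3}$) is needed here; I would fold it into the definition of the constants.

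\textbf{Residual terms and summation.} First, $\frac{\rho^k-\rho^{k-1}}{2}\|\mathbf r^k\|^2\le \frac{c_\rho}{6}(k-1)^{-2/3}\|\mathbf r^k\|^2$. For $k\ge2$ this is at most $\frac{\rho^{k-1}}{4}\|\mathbf r^k\|^2$, so after shifting the index by one it is absorbed by half of the $-\frac{\rho^{k-1}}{2}\|\mathbf r^{k}\|^2$ term from the previous step. The $k=1$ contribution remains as the explicit term $\frac{\rho^1-\rho^0}{2}\|\mathbf r^1\|^2$. This leaves $-\frac{c_\rho}{4}k^{1/3}\|\mathbf r^{k+1}\|^2$. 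Second, the noise term satisfies
\begin{align*}
\frac{2(a^{k+1})^2\sigma^2}{\Gamma^{k+1}}=2c_a^2c_\gamma\sigma^2(k+1)^{-1}.
\end{align*}
Its sum is at most $2c_a^2c_\gamma\sigma^2(1+\ln K)$. Telescoping gives $\Phi^{K+1}-\Phi^1$ on the left. Since $\Phi^{K+1}\ge\Phi^\star$, we obtain the weighted sum of the three quantities bounded by the stated numerator.

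Finally, bound each weight from below by $\min(C_{\mathcal{E}},\lambda_{\min}(C_{\mathbf{x}}),c_\rho/4)$ and divide through; this gives \eqref{eq:accumulation bound}. Expectations are taken throughout, since Lemma~\ref{Recursive Bound on Gradient Estimation Error} holds in expectation.

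For the lower bound $\Phi^\star$, I would take $\mathcal{L}_{\rho^{k-1}}\ge F+H$ evaluated at a feasible-type point, minus a dual term. The dual term is controlled through $\|\boldsymbol\lambda^k\|\le\|A^{\mathrm T}\boldsymbol\lambda^k\|$, using $\sigma_{\min}(A)=1$, and \eqref{x_optimality}. I take this as asserted by the statement.
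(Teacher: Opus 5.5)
Your argument follows the paper's proof step for step. It uses the same coefficient bounds $C_{\mathbf{x}}^k\succeq k^{1/3}C_{\mathbf{x}}$ and $C_{\mathcal{E}}^k\ge C_{\mathcal{E}}k^{-1/3}$, the same re-indexing of the $\mathbf{r}$-terms, the same harmonic bound on the noise, and the same division by the smallest weight. The problems below are therefore shared with the paper, not introduced by you.

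\textbf{Positivity of $C_{\mathbf{x}}$.} This is the step that fails, and your justification for it is the paper's incorrect one (``for sufficiently large $c_\eta$''). The subtracted term $\frac{3(1+\theta)}{2c_\rho}(C_\eta-c_\rho A^{\mathrm T}A)^{\mathrm T}(C_\eta-c_\rho A^{\mathrm T}A)$ grows quadratically in $c_\eta$. Enlarging $c_\eta$ therefore drives $\lambda_{\min}(C_{\mathbf{x}})\to-\infty$ rather than making it positive. In fact no parameter choice works once the graph has an edge:
\begin{itemize}
\item The off-diagonal blocks of $C_\eta-c_\rho A^{\mathrm T}A$ equal $c_\rho I_p$ on edges, so $\|C_\eta-c_\rho A^{\mathrm T}A\|^2\ge c_\rho^2$.
\item Hence \eqref{beta^k} forces $c_\beta\ge 6(1+\frac1\theta)c_\rho$.
\item Test on the consensus direction $v=\frac{1}{\sqrt n}\mathbf{1}\otimes u$ with $\|u\|=1$, where $A^{\mathrm T}Av=v$, and write $\delta=v^{\mathrm T}C_\eta v-c_\rho$. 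Then
\[
v^{\mathrm T}C_{\mathbf{x}}v\le\frac{c_\rho}{2}+\delta-\frac{3(1+\theta)}{2c_\rho}\delta^2-3\Bigl(1+\frac{1}{\theta}\Bigr)c_\rho\le c_\rho\Bigl(\frac12+\frac{1}{6(1+\theta)}-3\Bigr)<0
\]
for every $c_\eta,c_\rho,\theta>0$.
\end{itemize}
So ``taking positivity as part of the parameter choice'' makes the theorem vacuous for every connected network with $n\ge2$. The loss originates in the Lemma 1/Lemma 3 bookkeeping: the lagged term $S^{k-1}\Delta\mathbf{x}^k$ is bounded via a spectral norm and then charged to $\beta^{k+1}$. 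No choice of constants in this theorem can repair that.

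\textbf{The $\mathcal{E}$-constant.} You are right that it is not attained, but the fix goes the other way. Since $k^{1/3}<(k+1)^{1/3}$ and $2c_a(k+1)^{-1/3}-c_a^2(k+1)^{-1}<2c_ak^{-1/3}$, one has $\frac{1}{\Gamma^k}-\frac{(1-a^{k+1})^2}{\Gamma^{k+1}}<2c_ac_\gamma k^{-1/3}$ for \emph{every} $k$. The remaining terms of $C_{\mathcal{E}}^k$ are exactly $k^{-1/3}$ times those of $C_{\mathcal{E}}$, so nothing compensates. The paper's claim $C_{\mathcal{E}}^k\ge C_{\mathcal{E}}k^{-1/3}$ is therefore false as stated, and $2c_ac_\gamma$ must be \emph{reduced}. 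For example, using $a^{k+1}\le 1$ and $2a-a^2\ge a$ gives the valid constant $(2^{-1/3}c_a-\frac13)c_\gamma$. Enlarging constants or assuming $c_a\ge1$ does not help.

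\textbf{The lower bound $\Phi^\star$.} Your sketch uses $\|\boldsymbol\lambda\|\le\|A^{\mathrm T}\boldsymbol\lambda\|$, which fails. $A^{\mathrm T}$ has an $mp$-dimensional kernel, and $B\mathbf{y}^{k+1}\notin\mathrm{range}(A)$ unless $\mathbf{y}^{k+1}$ is consensual, so $\boldsymbol\lambda^k$ is not confined to $\mathrm{range}(A)$. The lower bound is assumed, not proved, in both your version and the paper's.
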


\textit{Proof:} See Appendix~\ref{appendix: potential function accumulation}.
\begin{remark}[Asymptotic Regularity]
Theorem \ref{potential function accumulation} establishes the global asymptotic stability of the proposed HSM-ADMM. Specifically, the finiteness of the weighted sum implies that $\liminf_{k \to \infty} \mathbb{E}\bigl[\|\mathcal{E}^k\|^2 + \|\Delta \mathbf{x}^{k+1}\|^2 + \|\mathbf{r}^{k+1}\|^2\bigr] = 0.$. This guarantees that as the iteration progresses, the gradient estimation error diminishes, the primal variable sequence stabilizes, and the consensus constraints are asymptotically satisfied in expectation, despite the inherent variance of stochastic gradients and the nonconvexity of the objective function.
\end{remark}

\begin{theorem}[Optimal Convergence Rate]\label{Convergence rate theorem}
Suppose that Assumptions \ref{Ass: undirected and connected}, \ref{Ass: mean-squared smoothness}, \ref{Ass: f,g lower bounded}, \ref{Ass: unbiasedness and variance boundedness} hold. Let  $\left ( \mathbf {x}^k,\mathbf {y}^k,\bm{\lambda}^k \right ) $ be the sequence generated by Algorithm 1 with $\rho^k=c_\rho k^{1/3}$, $a^k=c_a k^{-2/3}$. Then, the expected stationarity gap is bounded by:
\begin{align}
\min_{1 \leq k \leq K}\mathbb{E}\left[ \text{dist}^2\left( 0,\partial \mathcal{L}\left( \mathbf {x}^k,\mathbf {y}^k,\bm{\lambda} ^k \right) \right) \right] \leq \tilde{\mathcal{O}}\left ( K^{ -\frac{2}{3}} \right ),
\end{align}
achieving the optimal rate of $\tilde{\mathcal{O}}(K^{-2/3})$ (or $\tilde{\mathcal{O}}(\epsilon^{-1.5})$ in terms of stochastic oracle complexity).
\end{theorem}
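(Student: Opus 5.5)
The plan is to bound each of the three blocks of $\partial\mathcal{L}(\mathbf{x}^{k},\mathbf{y}^{k},\boldsymbol{\lambda}^{k})$ by quantities already controlled in Theorem~\ref{potential function accumulation}: the estimation error $\|\mathcal{E}^k\|^2$, the successive difference $\|\Delta\mathbf{x}^{k}\|^2$, and the residual $\|\mathbf{r}^{k}\|^2$. I would then average over $k$ with suitable weights.

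\textbf{The $\mathbf{x}$-block.} The optimality condition \eqref{x_optimality} at iteration $k-1$ gives $\nabla F(\mathbf{x}^{k-1})-A^{\mathrm T}\boldsymbol{\lambda}^{k-1}=-\mathcal{E}^{k-1}-\rho^{k-1}A^{\mathrm T}\mathbf{r}'-Q^{k-1}\Delta\mathbf{x}^{k}$, where $\mathbf{r}'=A\mathbf{x}^{k-1}+B\mathbf{y}^{k}$. Combining this with the dual update $\boldsymbol{\lambda}^{k}=\boldsymbol{\lambda}^{k-1}-\rho^{k-1}\mathbf{r}^{k}$, I would write
\[
\nabla F(\mathbf{x}^k)-A^{\mathrm T}\boldsymbol{\lambda}^k=\big(\nabla F(\mathbf{x}^k)-\nabla F(\mathbf{x}^{k-1})\big)-\mathcal{E}^{k-1}-S^{k-1}\Delta\mathbf{x}^{k}.
\]
Here $\rho^{k-1}A^{\mathrm T}(\mathbf{r}'-\mathbf{r}^k)=-\rho^{k-1}A^{\mathrm T}A\Delta\mathbf{x}^k$ merges with $Q^{k-1}$ to form $S^{k-1}$. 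Hence
\[
\|\cdot\|^2\le 3\big(L^2+\|S^{k-1}\|_2^2\big)\|\Delta\mathbf{x}^k\|^2+3\|\mathcal{E}^{k-1}\|^2 .
\]
Since $\|S^{k-1}\|_2^2=(k-1)^{2/3}\|C_\eta-c_\rho A^{\mathrm T}A\|_2^2$, this contributes $\mathcal{O}(k^{2/3})\|\Delta\mathbf{x}^k\|^2+\mathcal{O}(1)\|\mathcal{E}^{k-1}\|^2$.

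\textbf{The $\mathbf{y}$-block.} The optimality of \eqref{y_update_compact} gives $0\in\partial H(\mathbf{y}^k)+\boldsymbol{\beta}^{k-1}-\rho^{k-1}(\mathbf{x}^{k-1}-\mathbf{y}^k)$. Since $-B^{\mathrm T}\boldsymbol{\lambda}^k=\boldsymbol{\beta}^k=\boldsymbol{\beta}^{k-1}-\rho^{k-1}(\mathbf{x}^k-\mathbf{y}^k)$, some element of $\partial H(\mathbf{y}^k)-B^{\mathrm T}\boldsymbol{\lambda}^k$ equals $\rho^{k-1}(\mathbf{x}^{k-1}-\mathbf{x}^k)$. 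Its squared norm is therefore at most $(\rho^{k-1})^2\|\Delta\mathbf{x}^k\|^2=\mathcal{O}(k^{2/3})\|\Delta\mathbf{x}^k\|^2$.

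\textbf{The $\boldsymbol{\lambda}$-block.} This block is simply $\|\mathbf{r}^k\|^2$. To avoid a bad $k^{2/3}$ factor I would instead use $\mathbf{r}^k=-\Delta\boldsymbol{\lambda}^k/\rho^{k-1}$ together with Lemma~\ref{Upper bound of dual variable error}, or more directly bound it by the accumulated $k^{1/3}\|\mathbf{r}^{k}\|^2$ term.

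\textbf{Averaging.} Altogether,
\[
\mathrm{dist}^2\le c_1k^{2/3}\|\Delta\mathbf{x}^k\|^2+c_2\|\mathcal{E}^{k-1}\|^2+\|\mathbf{r}^k\|^2 .
\]
Taking expectations, multiply by $k^{-1/3}$ and sum over $k\le K$. Theorem~\ref{potential function accumulation} controls $\sum k^{1/3}\|\Delta\mathbf{x}^{k}\|^2$, $\sum k^{-1/3}\|\mathcal{E}^{k}\|^2$ and $\sum k^{1/3}\|\mathbf{r}^k\|^2$ by $\mathcal{O}(\ln K)$; the index shifts cost only constants, since $(k-1)^{\pm1/3}\asymp k^{\pm1/3}$, and the $k=1$ boundary term is handled separately. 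Hence $\sum_{k=1}^K k^{-1/3}\,\mathbb{E}[\mathrm{dist}^2]=\mathcal{O}(\ln K)$. Since $\sum_{k\le K}k^{-1/3}\ge \tfrac32(K^{2/3}-1)$, the minimum over $k$ is $\mathcal{O}(\ln K\cdot K^{-2/3})=\tilde{\mathcal O}(K^{-2/3})$. With one sample per agent per iteration, reaching accuracy $\epsilon$ takes $K=\tilde{\mathcal O}(\epsilon^{-1.5})$.

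\textbf{Main obstacle.} The hardest step is the weight matching. The $\mathbf{x}$- and $\mathbf{y}$-blocks carry $k^{2/3}\|\Delta\mathbf{x}^k\|^2$, and the $k^{-1/3}$ weighting turns this exactly into the available $k^{1/3}\|\Delta\mathbf{x}^k\|^2$. The $\|\mathcal{E}^{k-1}\|^2$ term becomes $k^{-1/3}\|\mathcal{E}^{k-1}\|^2$, which also fits. One must still check that the constants are uniform in the topology; for example, $\|C_\eta-c_\rho A^{\mathrm T}A\|_2$ is finite but depends on $d_i$, and this only affects the constant. One must also check that positivity of $\lambda_{\min}(C_\mathbf{x})$ and $C_{\mathcal E}$ is ensured by the parameter choices of Theorem~\ref{potential function accumulation}, which I will take as given.
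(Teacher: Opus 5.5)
Your proof is correct, granting Theorem~\ref{potential function accumulation} as the paper also does. Its skeleton is the paper's: bound each block of $\partial\mathcal{L}(\mathbf{x}^k,\mathbf{y}^k,\boldsymbol{\lambda}^k)$ by the three accumulated quantities, then average.

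The genuine difference is in the $\mathbf{x}$-block.
\begin{itemize}
\item The paper uses the current-iteration optimality condition, $\nabla F(\mathbf{x}^k)-A^{\mathrm{T}}\boldsymbol{\lambda}^k=-\mathcal{E}^k-\rho^kA^{\mathrm{T}}\mathbf{r}^{k+1}-S^k\Delta\mathbf{x}^{k+1}$. Its indices line up exactly with the accumulation bound, but it carries the extra term $3(\rho^k)^2\lambda_{\max}(A^{\mathrm{T}}A)\|\mathbf{r}^{k+1}\|^2$.
\item You use the backward identity $A^{\mathrm{T}}\boldsymbol{\lambda}^k=\mathbf{v}^{k-1}+S^{k-1}\Delta\mathbf{x}^k$, the relation derived in the proof of Lemma~\ref{Upper bound of dual variable error}. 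This removes the residual from the block entirely, at the cost of an $L^2\|\Delta\mathbf{x}^k\|^2$ term and a one-step index shift.
\end{itemize}
In the $\mathbf{y}$-block you use $B^{\mathrm{T}}A=-I$ exactly, where the paper bounds by $\|A\|^2\|B\|^2$. Together with the $\mathbf{x}$-block change, this keeps the factor $\lambda_{\max}(A^{\mathrm{T}}A)=1+\lambda_{\max}(\mathbf{L}_G)$ out of your constants. The degree-dependent $\|C_\eta-c_\rho A^{\mathrm{T}}A\|_2$, which you rightly flag, remains in both versions.

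Your $k^{-1/3}$-weighted average and the paper's uniform average (via $\sum_k k^{2/3}u_k\le K^{1/3}\sum_k k^{1/3}u_k$) are interchangeable. Both give $\mathcal{O}(K^{-2/3}\ln K)$.

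Your explicit treatment of index shifts is more careful than the paper's. Its steps 2 and 3 apply a bound stated for $\Delta\mathbf{x}^{k+1},\mathbf{r}^{k+1}$ to $\Delta\mathbf{x}^{k},\mathbf{r}^{k}$ without comment.

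You can avoid the $k=1$ boundary term altogether. Since $\min_{1\le k\le K}\le\min_{2\le k\le K}$, sum your weighted bound only over $k\ge 2$. There $\Delta\mathbf{x}^k$, $\mathcal{E}^{k-1}$ and $\mathbf{r}^k$ are all covered by the accumulation bound at index $k-1$, because $k^{-1/3}\le(k-1)^{-1/3}\le(k-1)^{1/3}$ and $k^{-1/3}(k-1)^{2/3}\le(k-1)^{1/3}$ for $k\ge 2$.

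Finally, drop the first option you float for the $\boldsymbol{\lambda}$-block, the one via Lemma~\ref{Upper bound of dual variable error}. The term $\|\mathbf{r}^k\|^2$ carries no $k^{2/3}$ factor, so the direct comparison with $k^{1/3}\|\mathbf{r}^k\|^2$ is all you need.
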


\textit{Proof:} See Appendix~\ref{appendix: Convergence rate theorem}.
\begin{remark}[Optimality and Topology Independence]
Theorem \ref{Convergence rate theorem} demonstrates that HSM-ADMM achieves the convergence rate of $\tilde{\mathcal{O}}(K^{-2/3})$, which precisely matches the optimal lower bound for first-order stochastic nonconvex optimization algorithms. Crucially, unlike existing centralized or consensus-based algorithms that rely on uniform step-sizes tied to the global spectral radius (e.g., $\eta < 1/\lambda_{\max}(\mathbf{L}_G)$), our result is derived under a heterogeneous step-size regime. This confirms that the proposed topology-aware adaptation strategy effectively mitigates the straggler effect without compromising the optimal convergence order.
\end{remark}

\section{Numerical Examples}\label{sec-V}
In this section, we validate the convergence theory of our algorithm and demonstrate its effectiveness over representative distributed algorithms, including SPPDM \cite{Wang2021TSP_SPPDM}, ProxGT-SR-O \cite{xin2021stochastic}, and DEEPSTORM \cite{AAAIDEEPSTORM}. All algorithms compared are implemented in Python using PyTorch and mpi4y.

Following the general settings in \cite{AAAIDEEPSTORM} and \cite{TesiXiao2023Prox-DASA}, we consider solving the classification problem
\begin{align*}
    \underset{x\in\mathbb{R}^p}{\min}~ \frac{1}{n} \sum_{i=1}^{n} \frac{1}{|\mathcal{D}_i|}\sum_{(a, b)\in \mathcal{D}_i}\ell_i(g(x,a),b) + l_1 \|x\|_1,
\end{align*}
on the a9a and MNIST datasets. Here $g(x,a)$ is the output of a neural network with parameter $x$ on data $a$, and $\ell_i$ is the cross-entropy loss function measuring the discrepancy between the output and the true label $b$. The local training dataset $\mathcal{D}_i$ is kept strictly private and is only accessible to agent $i$. The $\ell_1$-norm regularization term is incorporated to impose a sparsity structure on the network weights, with the regularization strength set to $\lambda = 0.0001$ based on standard practice.
\begin{figure}[h]
  \centering
  \begin{subfigure}{0.4\columnwidth}
    \centering
    \includegraphics[width=\linewidth]{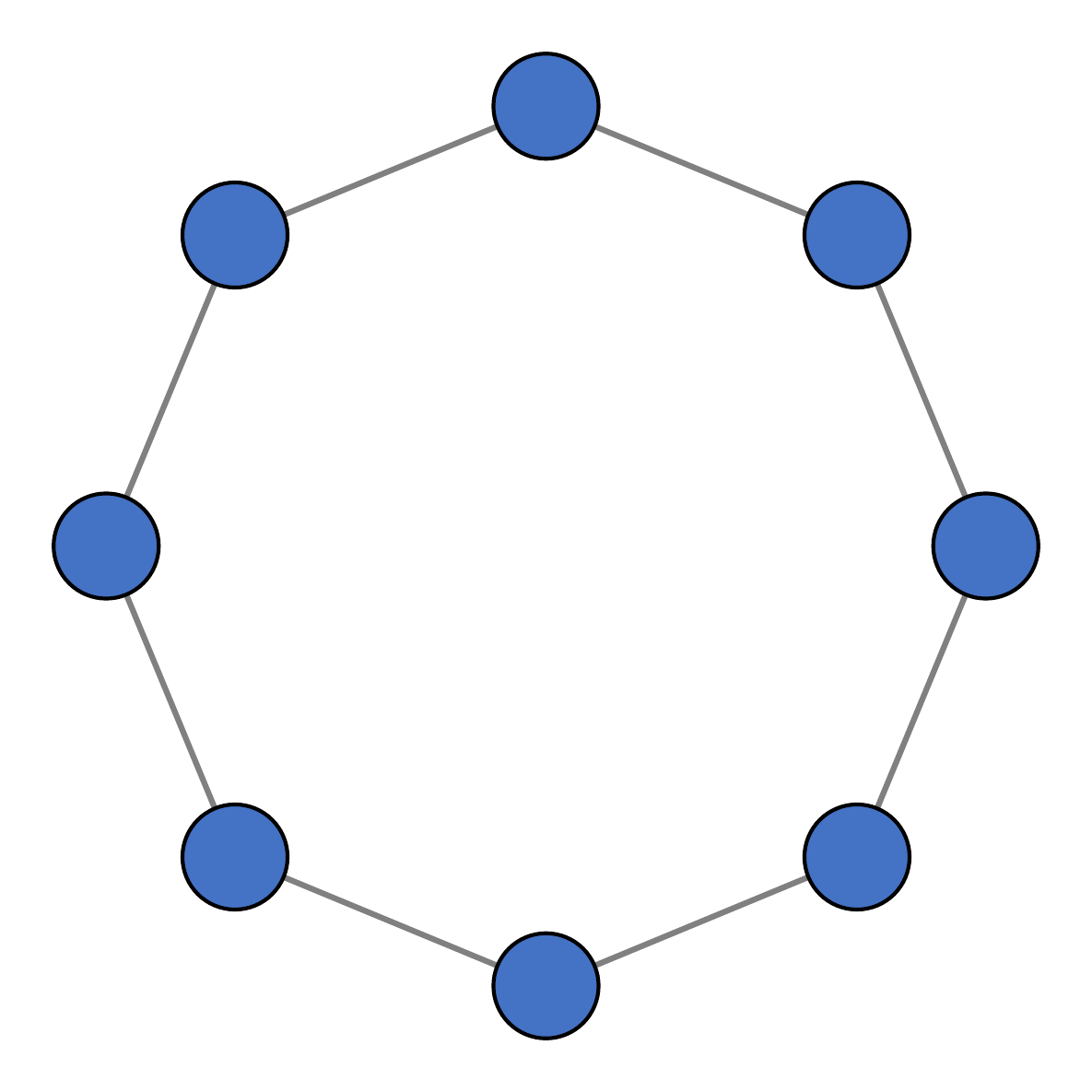}
    \caption{Ring graph}
    \label{fig: ring graph}
  \end{subfigure}
  \hfill
  \begin{subfigure}{0.4\columnwidth}
    \centering
    \includegraphics[width=\linewidth]{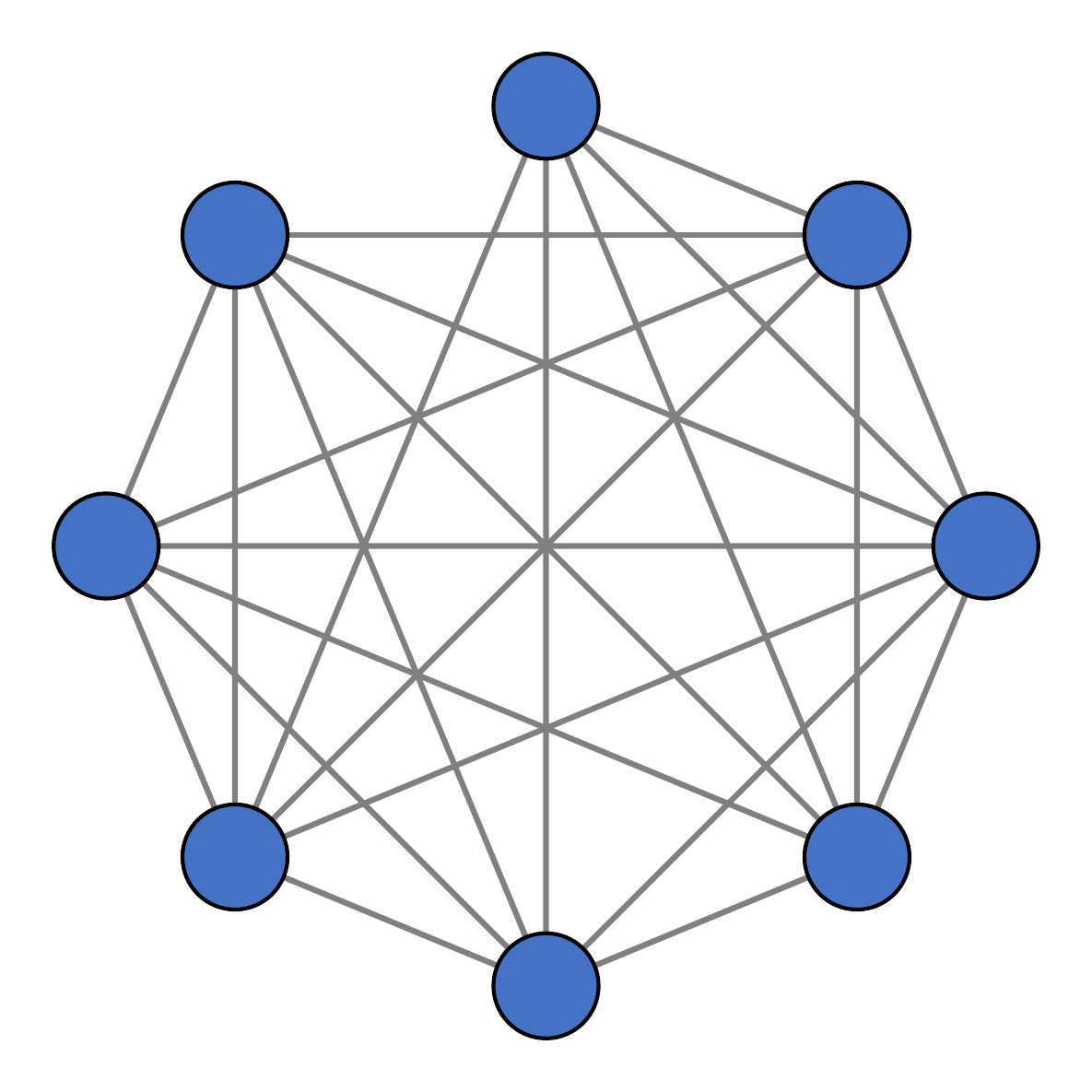}
    \caption{Random graph}
    \label{fig: random graph}
  \end{subfigure}
  \caption{Network topologies with 8 nodes}
  \label{fig: network topologies}
\end{figure}

\begin{figure*}[!t]
  \centering
  \begin{subfigure}{0.32\textwidth}
    \centering
    \includegraphics[width=\linewidth]{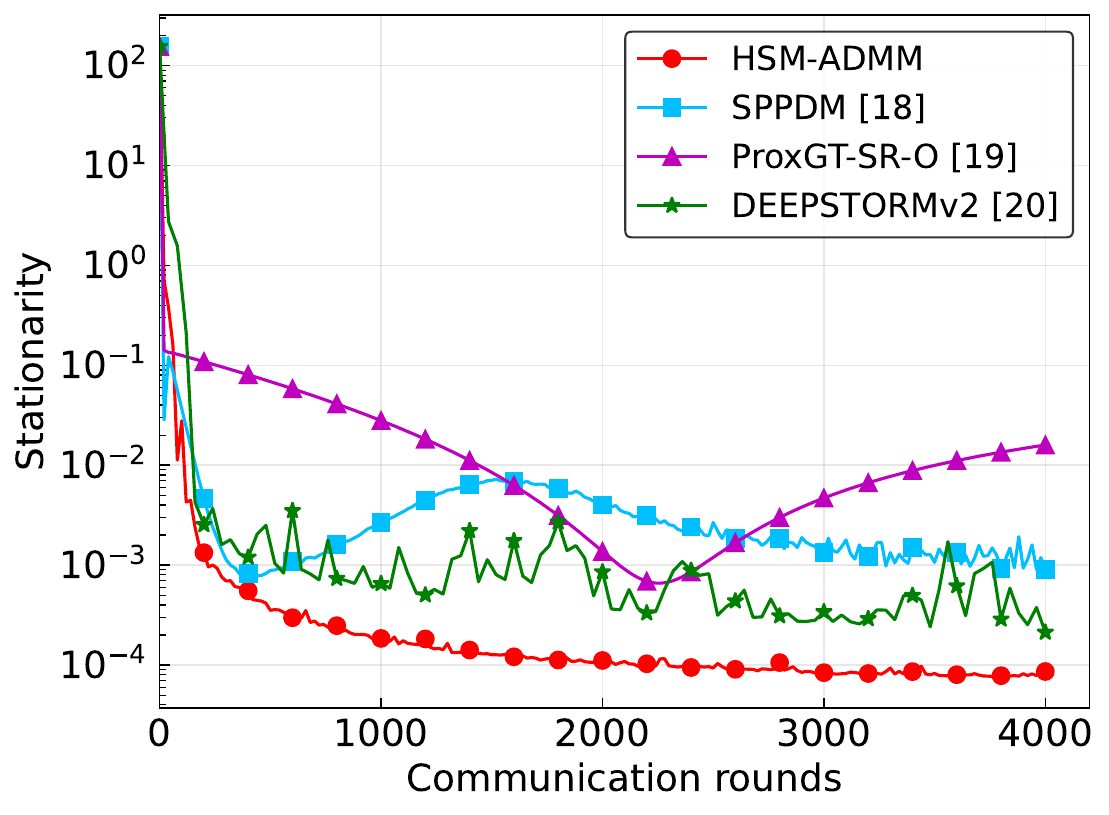}
    \caption{}
    \label{fig:ring_a9a_(a)}
  \end{subfigure}
  \hfill
  \begin{subfigure}{0.32\textwidth}
    \centering
    \includegraphics[width=\linewidth]{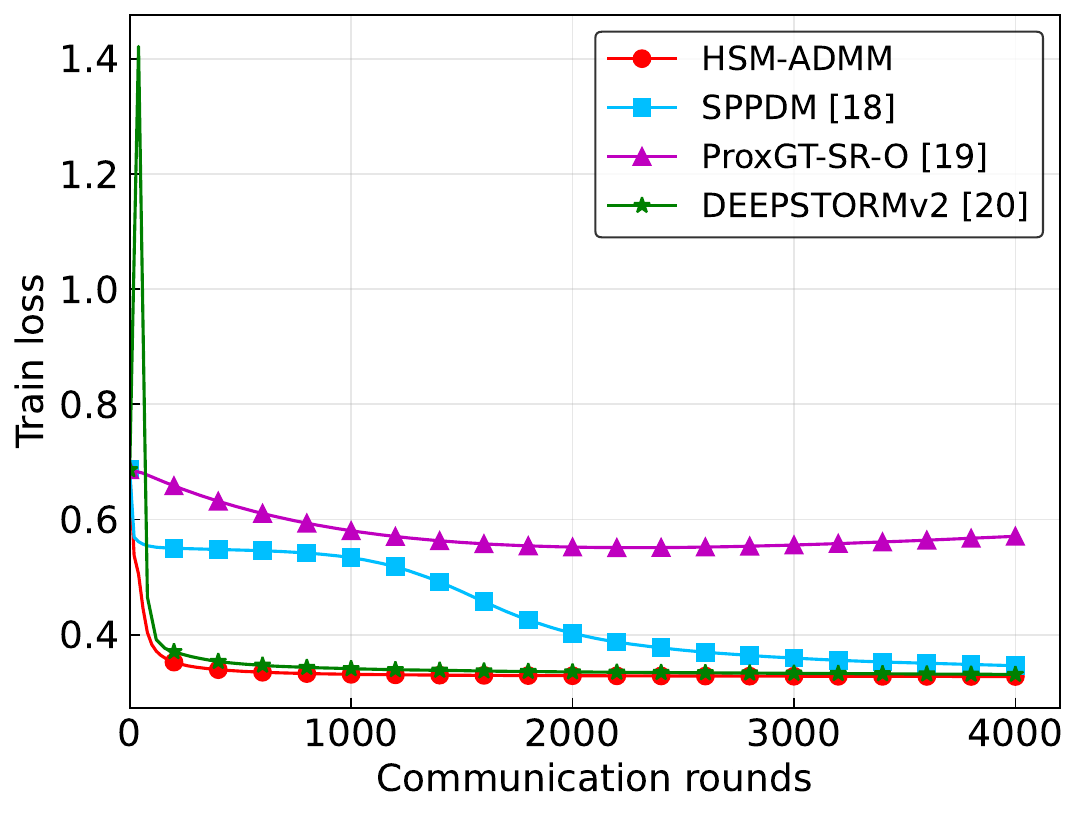}
    \caption{}
    \label{fig:ring_a9a_(b)}
  \end{subfigure}
  \hfill
  \begin{subfigure}{0.32\textwidth}
    \centering
    \includegraphics[width=\linewidth]{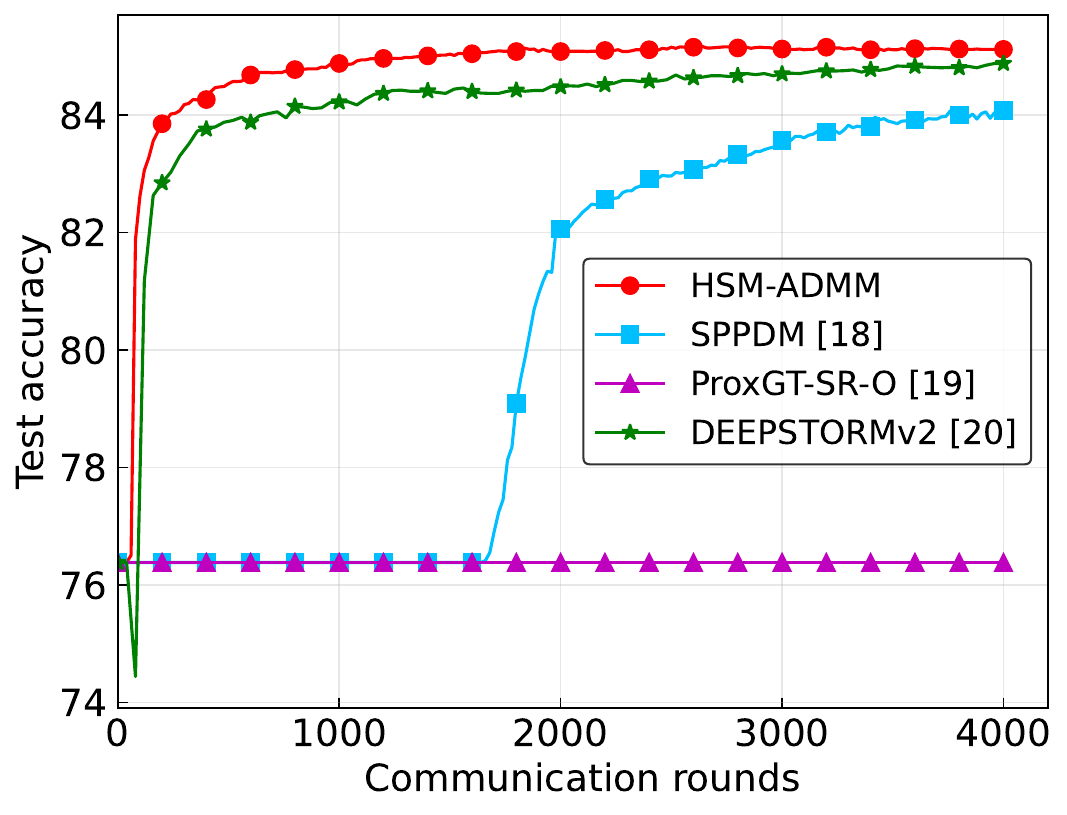}
    \caption{}
    \label{fig:ring_a9a_(c)}
  \end{subfigure}
  \caption{Performance comparison of distributed algorithms on a9a over a ring topology.}
  \label{fig: ring_a9a}
\end{figure*}

\begin{figure*}[!t]
  \centering
  \begin{subfigure}{0.32\textwidth}
    \centering
    \includegraphics[width=\linewidth]{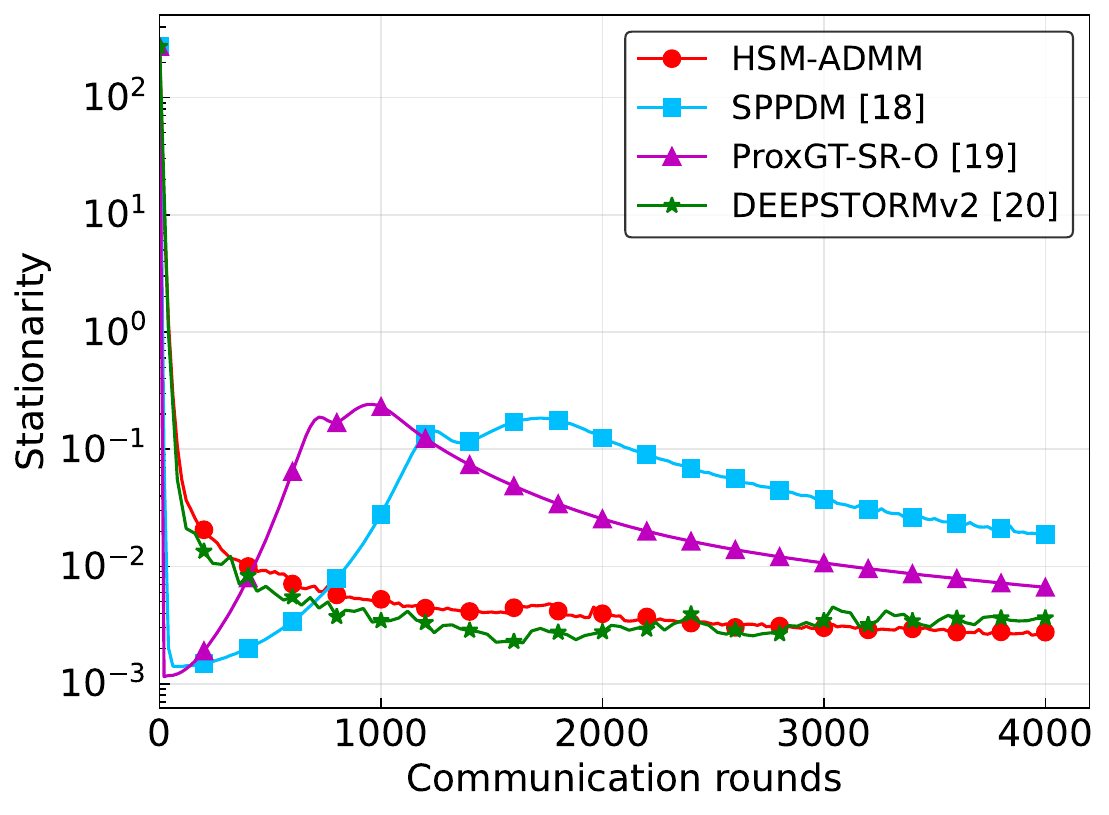}
    \caption{}
    \label{fig:MNIST_random_(a)}
  \end{subfigure}
  \hfill
  \begin{subfigure}{0.32\textwidth}
    \centering
    \includegraphics[width=\linewidth]{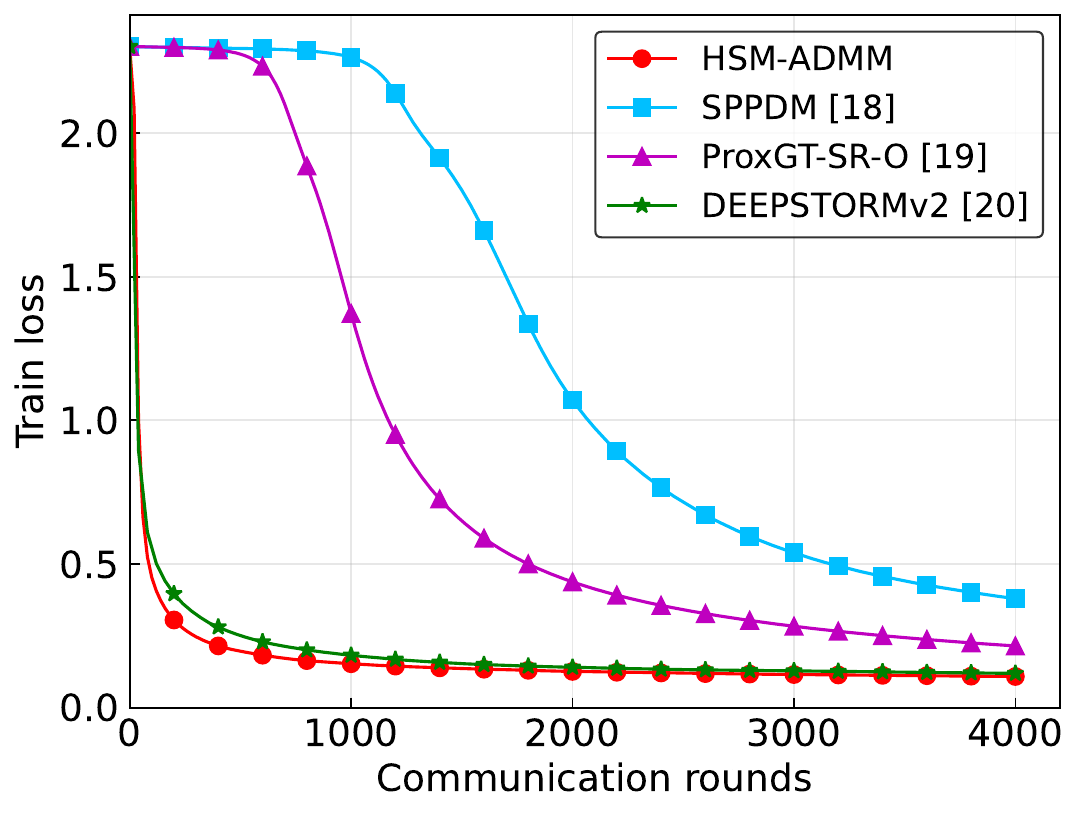}
    \caption{}
    \label{fig:MNIST_random_(b)}
  \end{subfigure}
  \hfill
  \begin{subfigure}{0.32\textwidth}
    \centering
    \includegraphics[width=\linewidth]{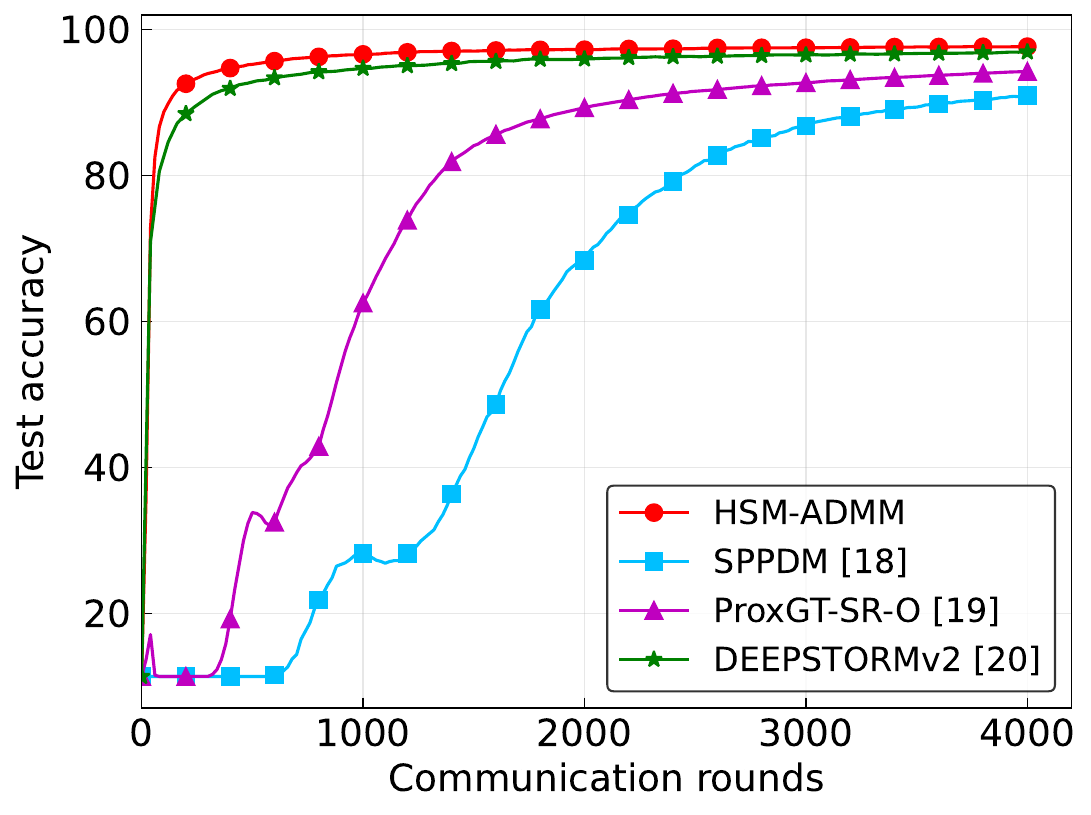}
    \caption{}
    \label{fig:MNIST_random_(c)}
  \end{subfigure}
  \caption{Performance comparison of distributed algorithms on MNIST over a random topology.}
  \label{fig: random_MNIST}
\end{figure*}
For the model architectures, we employ a 2-layer Multilayer Perceptron (MLP) for the a9a dataset and the standard LeNet architecture for the MNIST dataset. The communication network consists of 8 agents ($n=8$), which are connected over a ring topology for a9a and a randomly generated connected graph for MNIST, as illustrated in Fig.~\ref{fig: network topologies}. To demonstrate the performance of our algorithms in the constant batch size setting, the batch size is fixed to be 64 for a9a and 32 for MNIST for all algorithms. Furthermore, all algorithms are restricted to exactly one communication round per iteration to ensure a fair comparison of communication efficiency.

To quantitatively evaluate the convergence behavior, the stationarity measure at iteration $k$ is defined as follows:
\[\left\|\bar{x}^{k}-\hbox{prox}_{\sum_{i=1}^n{  h_i }}^1\left(\bar{x}^{k}-\nabla F(\bar{x}^{k})\right)\right\|_2^2 + \sum_{i=1}^n\left\|x_i^k-\bar{x}^k\right\|_2^2,\]
where $\bar{x}^k=\frac{1}{n}\sum_{i=1}^nx_i^k$, $ \nabla F(\bar{x}^k) = \frac{1}{n} \sum_{i=1}^n \nabla f_i(\bar{x}^k)$.

We periodically evaluate the performance metrics of these algorithms during the training phase. The results are plotted in Fig.~\ref{fig: ring_a9a} and Fig.~\ref{fig: random_MNIST}. It can be clearly observed that the proposed HSM-ADMM algorithm achieves superior performance in terms of the stationarity gap decay, training loss reduction, and overall test accuracy.

\section{Conclusions}\label{sec-VI}
In this paper, we propose a novel single-loop distributed algorithm, termed HSM-ADMM, to solve the nonconvex and nonsmooth stochastic  composite problem \eqref{Original Problem}. To address the scalability limitations of existing algorithms in highly heterogeneous networks, we introduce a heterogeneous adaptive step-size strategy. Our theoretical analysis establishes that this design successfully decouples the algorithmic stability from the global network topology (specifically, the spectral radius of the Laplacian matrix), thereby enabling robust convergence across diverse graph structures. Furthermore, by integrating the STORM recursive momentum estimator into a primal-dual framework, our algorithm achieves the optimal stochastic oracle complexity of $\tilde{\mathcal{O}}(\epsilon^{-1.5})$ using a constant $\mathcal{O}(1)$ mini-batch size. This allows HSM-ADMM to effectively handle stochastic noise without relying on large batches or cumbersome double-loop structures.

Future work includes extending this heterogeneous framework to asynchronous settings to handle communication delays, and incorporating gradient compression techniques to further reduce communication overhead in high-dimensional applications.

\section*{Appendix}


\renewcommand{\thesubsection}{\Alph{subsection}}
\subsection{Proof of Lemma \ref{Upper bound of dual variable error}}\label{appendix: proof of Upper bound of dual variable error}
Let $\tilde{\mathbf{r}}^k = A\mathbf{x}^k + B\mathbf{y}^{k+1}$ and $\mathbf{r}^{k+1} = A\mathbf{x}^{k+1} + B\mathbf{y}^{k+1}$, then
\[
\tilde{\mathbf{r}}^k = (A\mathbf{x}^{k+1} + B\mathbf{y}^{k+1}) - A(\mathbf{x}^{k+1} - \mathbf{x}^k) \notag = \mathbf{r}^{k+1} - A\Delta \mathbf{x}^{k+1}.
\]
According to the first-order optimality condition \eqref{x_optimality} for the $\mathbf{x}$-update, we have
\begin{align}\label{x_optimality_rewrite}
\mathbf{v}^k - A^\mathrm{T} {\boldsymbol{\lambda}}^k + \rho^k A^\mathrm{T} (\mathbf{r}^{k+1} - A\Delta \mathbf{x}^{k+1}) + Q^k \Delta \mathbf{x}^{k+1} = \mathbf{0}.
\end{align}
Moreover, the dual update rule \eqref{update lambda compact} implies:
\begin{align*}
A^\mathrm{T} {\boldsymbol{\lambda}}^{k+1} = A^\mathrm{T} {\boldsymbol{\lambda}}^k - \rho^k A^\mathrm{T} \mathbf{r}^{k+1}.
\end{align*}
Substituting $\tilde{\mathbf{r}}^k= \mathbf{r}^{k+1} - A\Delta \mathbf{x}^{k+1}$
 into \eqref{x_optimality_rewrite} and using the dual update relation:
\begin{align*}
\mathbf{0} &= \mathbf{v}^k - A^\mathrm{T} {\boldsymbol{\lambda}}^k + \rho^k A^\mathrm{T} (\mathbf{r}^{k+1} - A\Delta \mathbf{x}^{k+1}) + Q^k \Delta \mathbf{x}^{k+1} \\
&= \mathbf{v}^k - (A^\mathrm{T} {\boldsymbol{\lambda}}^{k+1} + \rho^k A^\mathrm{T} \mathbf{r}^{k+1}) + \rho^k A^\mathrm{T} \mathbf{r}^{k+1} \\&\quad + (Q^k - \rho^k A^\mathrm{T} A) \Delta \mathbf{x}^{k+1} \\
&= \mathbf{v}^k - A^\mathrm{T} {\boldsymbol{\lambda}}^{k+1} + S^k \Delta \mathbf{x}^{k+1},
\end{align*}
where $S^k \triangleq Q^k - \rho^k A^\mathrm{T} A$. Thus, we obtain the following relation
\begin{align}
A^\mathrm{T} {\boldsymbol{\lambda}}^{k+1} = \mathbf{v}^k + S^k \Delta \mathbf{x}^{k+1}.
\end{align}
Considering the difference between consecutive iterations:
\[
A^\mathrm{T} \Delta \boldsymbol{\lambda}^{k+1}=S^k \Delta \mathbf{x}^{k+1} + \mathbf{v}^k - \mathbf{v}^{k-1} - S^{k-1} \Delta \mathbf{x}^k.
\]

Our goal is to bound the norm $\| \Delta \boldsymbol{\lambda}^{k+1} \|^2$. Since $\sigma_{\min}(A)=1$ denotes the smallest non-zero singular value of $A$, we have the inequality $\| A^\mathrm{T} u \|^2 \ge \sigma_{\min}^2(A) \| u \|^2=\| u \|^2$ for any vector $u$ in the range of $A$. Thus, we establish the following fundamental inequality for the dual residual:
\begin{align}\label{diff-lambda-inequality}
\|\Delta {\boldsymbol{\lambda}}^{k+1}\|^2 \le  \|A^\mathrm{T} \Delta {\boldsymbol{\lambda}}^{k+1}\|^2.
\end{align}
Applying the inequality $\|a + b + c\|^2 \le (1+\theta)\|a\|^2 + 2(1+\frac{1}{\theta})(\|b\|^2 + \|c\|^2)$ for any $\theta > 0$, we can decompose the squared norm of the right-hand side as:
\begin{align}\label{A-diff-lambda-bound}
\|A^\mathrm{T} \Delta {\boldsymbol{\lambda}}^{k+1}\|^2 \
&\le \left ( 1+\theta \right ) \|S^k \Delta \mathbf{x}^{k+1}\|^2 \notag \\&+ 2(1+\frac{1}{\theta})\left ( \|\mathbf{v}^k - \mathbf{v}^{k-1} \|^2 + \|S^{k-1} \Delta \mathbf{x}^k \|^2 \right ). 
\end{align}

Next, utilizing the smoothness of $F$ (Assumption \ref{Ass: mean-squared smoothness}), we can bound $\|\mathbf{v}^k - \mathbf{v}^{k-1}\|^2$ as follows:
\begin{align}\label{diff-v-bound}
&\|\mathbf{v}^k - \mathbf{v}^{k-1}\|^2 \\
&= 2\|\mathcal{E}^k - \mathcal{E}^{k-1} +\nabla F(\mathbf{x}^k) -\nabla F(\mathbf{x}^{k-1})\|^2 \notag\\
&\le 2\|\mathcal{E}^k - \mathcal{E}^{k-1}\|^2 +2\|\nabla F(\mathbf{x}^k) -\nabla F(\mathbf{x}^{k-1})\|^2 \notag\\
&\le 2\|\mathcal{E}^k - \mathcal{E}^{k-1}\|^2 +2 L^2 \|\Delta \mathbf{x}^k\|^2 \notag\\
&\le 4 (\| \mathcal{E}^k \|^2 + \| \mathcal{E}^{k-1} \|^2) +2 L^2 \|\Delta \mathbf{x}^k\|^2.
\end{align}

Finally, substituting \eqref{diff-v-bound} back into \eqref{A-diff-lambda-bound}, we obtain the upper bound stated in Lemma \ref{Upper bound of dual variable error}:
\begin{align*}
\|\Delta {\boldsymbol{\lambda}}^{k+1}\|^2 &\le  \|A^\mathrm{T} \Delta {\boldsymbol{\lambda}}^{k+1}\|^2 \\ 
&\le   (1+\theta)(\Delta \mathbf{x}^{k+1})^\mathrm{T} (S^k)^\mathrm{T} S^k \Delta \mathbf{x}^{k+1}\\
&\quad+ \left ( 2(1+\frac{1}{\theta})\|S^{k-1}\|_2^2 + 4L^2(1+\frac{1}{\theta}) \right ) \|\Delta \mathbf{x}^k\|^2 \\
&\quad+ 8(1+\frac{1}{\theta})\|\mathcal{E}^k \|^2+8(1+\frac{1}{\theta})\| \mathcal{E}^{k-1}\|^2. 
\end{align*}
The proof is complete.
$\hfill \blacksquare$

\subsection{Proof of Lemma \ref{Recursive Bound on Gradient Estimation Error}}\label{appendix: Recursive Bound on Gradient Estimation Error}
Recalling the definition of the gradient tracking error $\mathcal{E}^{k+1} \triangleq \mathbf{v}^{k+1} - \nabla F(\mathbf{x}^{k+1})$ and the update rule of $\mathbf{v}^{k+1}$ in \eqref{update v compact}, we can expand $\mathcal{E}^{k+1}$ as
\begin{align*}
    \mathcal{E}^{k+1} &= \mathbf{v}^{k+1} - \nabla F(\mathbf{x}^{k+1}) \\
    &= \nabla F(\mathbf{x}^{k+1}, \xi^{k+1}) + (1 - a^{k+1})(\mathbf{v}^k - \nabla F(\mathbf{x}^k, \xi^{k+1}))\\
    &\quad - \nabla F(\mathbf{x}^{k+1}).
\end{align*}
By adding and subtracting $(1 - a^{k+1})\nabla F(\mathbf{x}^k)$ on the right-hand side, we obtain
\begin{align*}
    \mathcal{E}^{k+1} &= (1 - a^{k+1})(\mathbf{v}^k - \nabla F(\mathbf{x}^k)) \\
    &\quad + a^{k+1} ( \nabla F(\mathbf{x}^{k+1}, \xi^{k+1}) - \nabla F(\mathbf{x}^{k+1}) ) \\
    &\quad + (1 - a^{k+1}) \bigl[ \nabla F(\mathbf{x}^{k+1}, \xi^{k+1}) - \nabla F(\mathbf{x}^k, \xi^{k+1}) \\
    &\quad - (\nabla F(\mathbf{x}^{k+1}) - \nabla F(\mathbf{x}^k)) \bigr].
\end{align*}
For brevity, we denote the three terms on the right-hand side as $T_1$, $T_2$, and $T_3$, respectively:
\begin{align*}
T_1 &= (1 - a^{k+1})(\mathbf{v}^k - \nabla F(\mathbf{x}^k)),\\
T_2 &= a^{k+1} ( \nabla F(\mathbf{x}^{k+1}, \xi^{k+1}) - \nabla F(\mathbf{x}^{k+1}) ),\\
T_3 &= (1 - a^{k+1}) \bigl[ \nabla F(\mathbf{x}^{k+1}, \xi^{k+1}) - \nabla F(\mathbf{x}^k, \xi^{k+1}) \\
&\quad - (\nabla F(\mathbf{x}^{k+1}) - \nabla F(\mathbf{x}^k)) \bigr].
\end{align*}
Let $\mathcal{F}^k \triangleq \sigma(\xi^0, \dots, \xi^k)$ be the $\sigma$-algebra generated by the random samples up to iteration $k$. Taking the conditional expectation of the squared norm of $\mathcal{E}^{k+1}$ given $\mathcal{F}^k$, we have:
\begin{align*}
\mathbb{E}[\|\mathcal{E}^{k+1}\|^2 | \mathcal{F}^k] &= \|T_1\|^2 + \mathbb{E}[\|T_2 + T_3\|^2 | \mathcal{F}^k] \\
&\quad + 2\mathbb{E}[\langle T_1, T_2 + T_3 \rangle | \mathcal{F}^k].    
\end{align*}
Note that $T_1$ is deterministic conditioned on $\mathcal{F}^k$. Furthermore, since the sample $\xi^{k+1}$ is independent of $\mathcal{F}^k$ and the stochastic gradient is unbiased (Assumption \ref{Ass: unbiasedness and variance boundedness}), we have:
\[\mathbb{E}[T_2 | \mathcal{F}^k] = 0, \ \mathbb{E}[T_3 | \mathcal{F}^k] = 0. \]
Consequently, the cross-term vanishes:
\[
\mathbb{E}[\langle T_1, T_2 + T_3 \rangle | \mathcal{F}^k] = \langle T_1, \mathbb{E}[T_2 + T_3 | \mathcal{F}^k] \rangle = 0.
\]
Therefore, we have
\begin{align*}
    &\mathbb{E}[\|\mathcal{E}^{k+1}\|^2 | \mathcal{F}^k] \\&= \|T_1\|^2 + \mathbb{E}[\|T_2 + T_3\|^2 | \mathcal{F}^k] \\
    &\le \|T_1\|^2 + 2\mathbb{E}[\|T_2\|^2 | \mathcal{F}^k] + 2\mathbb{E}[\|T_3\|^2 | \mathcal{F}^k]\\
    &\le(1 - a^{k+1})^2\|\mathcal{E}^k\|^2 + 2(a^{k+1})^2 \sigma^2\\
    &\quad + (1 - a^{k+1})^2\mathbb{E}[\| \nabla F(\mathbf{x}^{k+1}, \xi^{k+1}) - \nabla F(\mathbf{x}^k, \xi^{k+1})\|^2 | \mathcal{F}^k]\\
    &\le(1 - a^{k+1})^2\|\mathcal{E}^k\|^2 + 2(a^{k+1})^2 \sigma^2\\
    &\quad +2L^2 (1 - a^{k+1})^2\|\Delta \mathbf{x}^{k+1}\|^2,
\end{align*}
where the first inequality uses $\|a+b\|^2 \le 2\|a\|^2 + 2\|b\|^2$, the second inequality follows from the Assumption \ref{Ass: unbiasedness and variance boundedness} and $\mathbb{E}[\|X - \mathbb{E}(X)\|^2] \le \mathbb{E}[\|X\|^2]$, the last inequality follows from the Assumption \ref{Ass: mean-squared smoothness}. Taking the total expectation on both sides completes the proof. $\hfill \blacksquare$

\subsection{Proof of Lemma \ref{Single-Step Lyapunov Descent Lemma}}\label{appendix: Single-Step Lyapunov Descent Lemma}
We analyze the difference of the Lyapunov function $\Phi^{k+1} - \Phi^k$. Recall the definition of $\Phi^k$ in \eqref{eq: Lyapunov function}. We decompose its difference into three parts: the change in the augmented Lagrangian function, the change in the gradient estimation error, and the change in the primal momentum term
\begin{align}\label{Single-Step Lyapunov Descent equality}
&\Phi^{k+1} - \Phi^k \notag\\
&= \mathcal{L}_{\rho^k}(\mathbf{x}^{k+1}, \mathbf{y}^{k+1}, {\boldsymbol{\lambda}}^{k+1}) - \mathcal{L}_{\rho^{k-1}}(\mathbf{x}^k, \mathbf{y}^k, {\boldsymbol{\lambda}}^k) \notag\\
&\quad +  \frac{1}{\Gamma^{k+1}}\|\mathcal{E}^{k+1}\|^2 + \frac{C_{err}}{\rho^k }\|\mathcal{E}^{k}\|^2- \frac{1}{\Gamma^k}\|\mathcal{E}^k\|^2\notag\\
&\quad -\frac{C_{err}}{\rho^{k-1} } \|\mathcal{E}^{k-1}\|^2+ \frac{\beta_{k+1}}{2}\|\Delta \mathbf{x}^{k+1}\|^2 - \frac{\beta^k}{2}\|\Delta \mathbf{x}^k\|^2.
\end{align}
We further decompose the augmented Lagrangian function difference based on the updates of the variable sequence $(\mathbf{x}, \mathbf{y}, \boldsymbol{\lambda})$ and the parameter $\rho$
\begin{align*}
&\mathcal{L}_{\rho^k}(\mathbf{x}^{k+1}, \mathbf{y}^{k+1}, {\boldsymbol{\lambda}}^{k+1}) - \mathcal{L}_{\rho^{k-1}}(\mathbf{x}^k, \mathbf{y}^k, {\boldsymbol{\lambda}}^k) \\
&=  \underbrace{\mathcal{L}_{\rho^k}(\mathbf{x}^k, \mathbf{y}^k, {\boldsymbol{\lambda}}^k) - \mathcal{L}_{\rho^{k-1}}(\mathbf{x}^k, \mathbf{y}^k, {\boldsymbol{\lambda}}^k)}_{ \text{ (a) Parameter } \rho \text{ Change} } \\
&\quad + \underbrace{\mathcal{L}_{\rho^k}(\mathbf{x}^k, \mathbf{y}^{k+1}, {\boldsymbol{\lambda}}^k) - \mathcal{L}_{\rho^k}(\mathbf{x}^k, \mathbf{y}^k, {\boldsymbol{\lambda}}^k)}_{\text{(b) Primal } \mathbf{y} \text{ Update}} \\
&\quad + \underbrace{\mathcal{L}_{\rho^k}(\mathbf{x}^{k+1}, \mathbf{y}^{k+1}, {\boldsymbol{\lambda}}^k) - \mathcal{L}_{\rho^k}(\mathbf{x}^k, \mathbf{y}^{k+1}, {\boldsymbol{\lambda}}^k)}_{\text{(c) Primal } \mathbf{x} \text{ Update}} \\
&\quad + \underbrace{\mathcal{L}_{\rho^k}(\mathbf{x}^{k+1}, \mathbf{y}^{k+1}, {\boldsymbol{\lambda}}^{k+1}) - \mathcal{L}_{\rho^k}(\mathbf{x}^{k+1}, \mathbf{y}^{k+1}, {\boldsymbol{\lambda}}^k)}_{\text{(d) Dual } \boldsymbol{\lambda} \text{ Update}}.
\end{align*}
Next, we will analyze the upper bounds of terms (a), (b), (c), and (d) separately.

Analysis of (a): Since the penalty parameter is non-decreasing ($\rho^k \ge \rho^{k-1}$), the change yields:
\begin{align*}
(\text{a}) =\frac{\rho^k - \rho^{k-1}}{2} \|\mathbf{r}^k\|^2 . 
\end{align*}

Analysis of (b):
Because $\mathbf{y}^{k+1}$ is the minimum point of $\mathcal{L}_{\rho^k}(\mathbf{x}^k, \mathbf{y}, {\boldsymbol{\lambda}}^k)$, we have:
\begin{align*}
\text{(b)}=\mathcal{L}_{\rho^k}(\mathbf{x}^k, \mathbf{y}^{k+1}, {\boldsymbol{\lambda}}^k) - \mathcal{L}_{\rho^k}(\mathbf{x}^k, \mathbf{y}^k, {\boldsymbol{\lambda}}^k) \le 0.
\end{align*}

Analysis of (c): The change with respect to $\mathbf{x}$ can be written as
\begin{align}\label{(c) expression_v1}
\text{(c)}&= F(\mathbf{x}^{k+1})- F(\mathbf{x}^{k})-\langle {\boldsymbol{\lambda}}^k, A\Delta \mathbf{x}^{k+1} \rangle \notag\\
&\quad+\frac{\rho^k}{2} ( \|\mathbf{r}^{k+1}\|^2 - \|\tilde{\mathbf{r}}^k\|^2 ) .
\end{align}
Using the $L$-smoothness of $F$ (implied by Assumption \ref{Ass: mean-squared smoothness}), we bound the function difference:
\begin{align}\label{F smooth}
F(\mathbf{x}^{k+1}) - F(\mathbf{x}^k) \le \langle \nabla F(\mathbf{x}^k), \Delta \mathbf{x}^{k+1} \rangle + \frac{L}{2} \|\Delta \mathbf{x}^{k+1}\|^2    .
\end{align}
According to the first-order optimality condition \eqref{x_optimality}, we have
\[\mathbf{v}^k - A^\mathrm{T} {\boldsymbol{\lambda}}^k + \rho^k A^\mathrm{T} \tilde{\mathbf{r}}^k + Q^k \Delta \mathbf{x}^{k+1} = 0,\]
therefore, the inner product term becomes:
\begin{align}\label{diff F diif x}
&\langle \nabla F(\mathbf{x}^k), \Delta \mathbf{x}^{k+1} \rangle \notag\\ &= \langle \mathbf{v}^k - \mathcal{E}^k, \Delta \mathbf{x}^{k+1} \rangle \notag\\
&=\langle A^\mathrm{T} {\boldsymbol{\lambda}}^k - \rho^k A^\mathrm{T} \tilde{\mathbf{r}}^k - Q^k \Delta \mathbf{x}^{k+1} - \mathcal{E}^k, \Delta \mathbf{x}^{k+1} \rangle .
\end{align}
Substituting (\ref{F smooth}) and (\ref{diff F diif x}) back into (\ref{(c) expression_v1}):
\begin{align}\label{(c) expression_v2}
\text{(c)}&\le \langle \nabla F(\mathbf{x}^k), \Delta \mathbf{x}^{k+1} \rangle + \frac{L}{2} \|\Delta \mathbf{x}^{k+1}\|^2 -\langle {\boldsymbol{\lambda}}^k, A\Delta \mathbf{x}^{k+1} \rangle \notag\\
&\quad  +\frac{\rho^k}{2} ( \|\mathbf{r}^{k+1}\|^2 - \|\tilde{\mathbf{r}}^k\|^2 )\notag\\
&\le \langle - \rho^k A^\mathrm{T} \tilde{\mathbf{r}}^k - Q^k \Delta \mathbf{x}^{k+1} - \mathcal{E}^k, \Delta \mathbf{x}^{k+1} \rangle + \frac{L}{2} \|\Delta \mathbf{x}^{k+1}\|^2\notag\\
&\quad  +\frac{\rho^k}{2} ( \|\mathbf{r}^{k+1}\|^2 - \|\tilde{\mathbf{r}}^k\|^2 ).
\end{align}
Utilizing equation
\begin{align*}
\|\mathbf{r}^{k+1}\|^2 &= \|\tilde{\mathbf{r}}^k + A \Delta \mathbf{x}^{k+1}\|^2 \\
&= \|\tilde{\mathbf{r}}^k\|^2 + 2 \langle \tilde{\mathbf{r}}^k, A \Delta \mathbf{x}^{k+1} \rangle + \|A \Delta \mathbf{x}^{k+1}\|^2,
\end{align*}
and
\begin{align*}
&-\rho^k \langle A^\mathrm{T} \tilde{\mathbf{r}}^k, \Delta \mathbf{x}^{k+1} \rangle + \frac{\rho^k}{2} ( \|\mathbf{r}^{k+1}\|^2 - \|\tilde{\mathbf{r}}^k\|^2 )\\
&= -\rho^k \langle \tilde{\mathbf{r}}^k, A\Delta \mathbf{x}^{k+1} \rangle + \frac{\rho^k}{2}(2 \langle \tilde{\mathbf{r}}^k, A \Delta \mathbf{x}^{k+1} \rangle + \|A \Delta \mathbf{x}^{k+1}\|^2)\\
&= \frac{\rho^k}{2} \|A\Delta \mathbf{x}^{k+1}\|^2,
\end{align*}
we can further simplify (\ref{(c) expression_v2}) as:
\begin{align}
\text{(c)}\label{(c) expression_v3}
\le&  - (\Delta \mathbf{x}^{k+1})^\mathrm{T} \left( Q^k - \frac{\rho^k}{2}A^\mathrm{T}A - \frac{L}{2}I \right) (\Delta \mathbf{x}^{k+1}) \notag\\
&-\langle \mathcal{E}^k, \Delta \mathbf{x}^{k+1} \rangle.
\end{align}
Applying Young's inequality to the cross term $\langle \mathcal{E}^k, \Delta x^{k+1} \rangle$, we yield
\[-\langle \mathcal{E}^k, \Delta x^{k+1} \rangle \le \frac{1}{2\mu^k}\|\mathcal{E}^k\|^2 + \frac{\mu^k}{2}\|\Delta x^{k+1}\|^2,\]
we obtain the final bound for (c):
\begin{align}
\text{(c)}\label{(c) expression_v4}
\le& - (\Delta \mathbf{x}^{k+1})^\mathrm{T} \left( Q^k - \frac{\rho^k}{2}A^\mathrm{T}A - \frac{L+\mu^k}{2}I \right) (\Delta \mathbf{x}^{k+1}) \notag\\
&+\frac{1}{2\mu^k}\|\mathcal{E}^k\|^2.
\end{align}

Analysis of (d): Using the dual update rule $\boldsymbol{\lambda}^{k+1} ={\boldsymbol{\lambda}}^k -\rho^k\mathbf{r}^{k+1}$, we get
\begin{align}\label{(d) expression}
\text{(d)}&= -\langle {\boldsymbol{\lambda}}^{k+1} - {\boldsymbol{\lambda}}^k, \mathbf{r}^{k+1} \rangle \notag\\
&=\rho^k\|\mathbf{r}^{k+1}\|^2 \notag\\
&=-\frac{\rho^k}{2}\|\mathbf{r}^{k+1}\|^2+\frac{3}{2}\rho^k\|\mathbf{r}^{k+1}\|^2 \notag\\
&=-\frac{\rho^k}{2}\|\mathbf{r}^{k+1}\|^2+\frac{3}{2\rho^k}\|\Delta {\boldsymbol{\lambda}}^{k+1}\|^2 .
\end{align}
Combining (a), (b), (c), (d) and utilizing the upper bound of $\|\Delta {\boldsymbol{\lambda}}^{k+1}\|^2$ (Lemma \ref{Upper bound of dual variable error}) we obtain
\begin{align}\label{part I}
&\mathcal{L}_{\rho^k}(\mathbf{x}^{k+1}, \mathbf{y}^{k+1}, {\boldsymbol{\lambda}}^{k+1}) - \mathcal{L}_{\rho^{k-1}}(\mathbf{x}^k, \mathbf{y}^k, {\boldsymbol{\lambda}}^k) \notag\\
\le & - (\Delta \mathbf{x}^{k+1})^\mathrm{T} \Bigl[ Q^k - \frac{L}{2}I -\frac{\mu^k}{2}I- \frac{\rho^k}{2}A^\mathrm{T}A \notag\\
&- \frac{3(1+\theta)}{2\rho^k}(S^k)^\mathrm{T}S^k \Bigr] (\Delta \mathbf{x}^{k+1})  + \frac{3D^k}{2\rho^k } \|\Delta \mathbf{x}^k\|^2 \notag\\
& + \left( \frac{1}{2\mu^k} + \frac{12(1+\frac{1}{\theta})}{\rho^k } \right) \|\mathcal{E}^k\|^2 + \frac{12(1+\frac{1}{\theta})}{\rho^k } \|\mathcal{E}^{k-1}\|^2 \notag\\
& + \frac{\rho^k - \rho^{k-1}}{2} \|\mathbf{r}^k\|^2 - \frac{\rho^k}{2} \|\mathbf{r}^{k+1}\|^2,
\end{align}
where 
\[D^k = 2(1+\frac{1}{\theta})\|S^{k-1}\|^2 + 4L^2(1+\frac{1}{\theta}).\]
Simultaneously, according to Lemma \ref{Recursive Bound on Gradient Estimation Error}, we have
\begin{align}\label{part II}
& \frac{1}{\Gamma^{k+1}}\|\mathcal{E}^{k+1}\|^2 - \frac{1}{\Gamma^k}\|\mathcal{E}^k\|^2 +C_{err}\left ( \frac{\|\mathcal{E}^{k}\|^2}{\rho^k} -\frac{\|\mathcal{E}^{k-1}\|^2}{\rho^{k-1}} \right ) 
\notag\\
&\le \frac{2L^2(1-a^{k+1})^2}{\Gamma^{k+1}} \|\Delta \mathbf{x}^{k+1}\|^2 \notag\\
& + \left( \frac{(1-a^{k+1})^2}{\Gamma^{k+1}} - \frac{1}{\Gamma^k} + \frac{C_{err}}{\rho^k} \right) \|\mathcal{E}^k\|^2 \notag\\
& - \frac{C_{err}}{\rho^{k-1}} \|\mathcal{E}^{k-1}\|^2 + \frac{2(a^{k+1})^2 \sigma^2}{\Gamma^{k+1}}.
\end{align}
Substituting formulas (\ref{part I}) and (\ref{part II}) into formula (\ref{Single-Step Lyapunov Descent equality}), we yield the comprehensive single-step descent inequality:
\begin{align}\label{Single-Step descent complete expression}
&\Phi^{k+1} - \Phi^k\notag\\
\le & - (\Delta \mathbf{x}^{k+1})^\mathrm{T} C_{\mathbf{x}}^k (\Delta \mathbf{x}^{k+1})  - C_{\mathcal{E}}^k \|\mathcal{E}^k\|^2 \notag\\
& + \left(\frac{3D^k}{2\rho^k } - \frac{\beta^k}{2}\right) \|\Delta \mathbf{x}^k\|^2  + \left(\frac{12(1+\frac{1}{\theta})}{\rho^k } - \frac{C_{err}}{\rho^{k-1}}\right) \|\mathcal{E}^{k-1}\|^2 \notag\\
&  - \frac{\rho^k}{2} \|\mathbf{r}^{k+1}\|^2 + \frac{\rho^k - \rho^{k-1}}{2} \|\mathbf{r}^k\|^2 + \frac{2(a^{k+1})^2 \sigma^2}{\Gamma^{k+1}},
\end{align}
where
\begin{align*}
C_{\mathbf{x}}^k &= \left( Q^k - \frac{\rho^k}{2} A^\mathrm{T} A \right) - \frac{3(1+\theta)}{2\rho^k } (S^k)^\mathrm{T} S^k \\
&\quad - \left( \frac{\mu^k}{2} + \frac{\beta_{k+1}}{2} + \frac{L}{2} + \frac{2L^2(1-a^{k+1})^2}{\Gamma^{k+1}} \right) I,\\
C_{\mathcal{E}}^k &=\frac{1}{\Gamma^k} - \frac{(1-a^{k+1})^2}{\Gamma^{k+1}} -  \frac{1}{2\mu^k} - \frac{12(1+\frac{1}{\theta})}{\rho^k } - \frac{C_{err}}{\rho^k}. 
 \end{align*}
Let parameters $\beta^k$ and $C_{err}$ satisfy conditions \eqref{beta^k} and \eqref{C_err} respectively, we have
\begin{align*}
\frac{3D^k}{2\rho^k } - \frac{\beta^k}{2}\le0,\ 
\frac{12(1+\frac{1}{\theta})}{\rho^k } - \frac{C_{err}}{\rho^{k-1}}\le0.
\end{align*}
So we can remove the $\|\Delta \mathbf{x}^k\|^2$ and $\|\mathcal{E}^{k-1}\|^2$ related terms in \eqref{Single-Step descent complete expression} to obtain \eqref{Single-Step Lyapunov Descent}.
$\hfill \blacksquare$
\subsection{Proof of Theorem \ref{potential function accumulation}}\label{appendix: potential function accumulation}
Based on the derivation process of the Lemma \ref{Single-Step Lyapunov Descent Lemma}, we establish the following lower bounds for the parameter sequences:
\begin{align*}
C_{\mathbf{x}}^k \ge k^{1/3} C_\mathbf{x},\\ C_{\mathcal{E}}^k \ge C_{\mathcal{E}} k^{-\frac{1}{3}},
\end{align*}
where 
\begin{align*}
C_\mathbf{x} &= \left( C_\eta - \frac{c_\rho}{2} A^\mathrm{T} A \right)\\
&\quad- \frac{3(1+\theta)}{2c_\rho } (C_\eta - c_\rho A^\mathrm{T} A)^\mathrm{T} (C_\eta - c_\rho A^\mathrm{T} A)\\
&\quad - \left( \frac{c_\mu}{2} + \frac{c_\beta}{2} +\frac{L}{2} + 2L^2 c_\gamma \right) I,\\
C_{\mathcal{E}} &= 2c_a c_\gamma  - \frac{1}{2c_\mu} - \frac{12(1+\frac{1}{\theta})}{c_\rho } - \frac{C_{err}}{c_\rho}.
\end{align*}

Summing the recursive inequality (\ref{Single-Step Lyapunov Descent}) derived in Lemma \ref{Single-Step Lyapunov Descent Lemma} from $k=1$ to $K$, we obtain
\begin{align}\label{eq:sum_lyapunov}
&\sum_{k=1}^K(\Phi^{k+1} - \Phi^k )\notag\\
&\le  - \sum_{k=1}^K(\Delta \mathbf{x}^{k+1})^\mathrm{T} C_{\mathbf{x}}^k (\Delta \mathbf{x}^{k+1})  - \sum_{k=1}^KC_{\mathcal{E}}^k \|\mathcal{E}^k\|^2  \notag\\
& + \sum_{k=1}^K\left ( \frac{\rho^k - \rho^{k-1}}{2} \|\mathbf{r}^k\|^2 -\frac{\rho^k }{2} \|\mathbf{r}^{k+1}\|^2\right )  + \sum_{k=1}^K\frac{2(a^{k+1})^2 \sigma^2}{\Gamma^{k+1}}.
\end{align}
For sufficiently large $c_\eta$, $C_\mathbf{x}$ is positive definite, and its minimum eigenvalue is positive. Using the Rayleigh quotient inequality, we have 
\[ (\Delta \mathbf{x}^{k+1})^\mathrm{T} C_\mathbf{x} (\Delta \mathbf{x}^{k+1}) \ge \lambda_{\min}(C_\mathbf{x}) \|\Delta \mathbf{x}^{k+1}\|^2. \]
This directly implies
\[- (\Delta \mathbf{x}^{k+1})^\mathrm{T} C_{\mathbf{x}}^k (\Delta \mathbf{x}^{k+1}) \le - \lambda_{\min}(C_\mathbf{x}) k^{1/3} \|\Delta \mathbf{x}^{k+1}\|^2.\]

Regarding the residual terms, by telescoping the sum, we can rearrange it as follows:
\begin{align*}
&\sum_{k=1}^K\left ( \frac{\rho^k - \rho^{k-1}}{2} \|\mathbf{r}^k\|^2 -\frac{\rho^k }{2} \|\mathbf{r}^{k+1}\|^2\right )\\
&=\frac{\rho^1 -\rho^0}{2}\|\mathbf{r}^1\|^2 +\sum_{k=1}^{K-1}\left ( \frac{\rho^{k+1} - \rho^k}{2}- \frac{ \rho^k}{2}\right )\|\mathbf{r}^{k+1}\|^2 \\
&\quad -\frac{\rho^K }{2} \|\mathbf{r}^{K+1}\|^2\\
&\le \frac{\rho^1 -\rho^0}{2}\|\mathbf{r}^1\|^2 + \sum_{k=1}^{K}\left ( \frac{\rho^{k+1} - \rho^k}{2}- \frac{ \rho^k}{2}\right )\|\mathbf{r}^{k+1}\|^2.
\end{align*}
Substituting the parameter scaling $\rho^k = c_\rho k^{1/3}$, the coefficient inside the summation can be strictly bounded by using the inequality $(k+1)^{\frac{1}{3}} \le k^{\frac{1}{3}} + \frac{1}{3}k^{-\frac{2}{3}}$

\begin{align*}
\frac{\rho^{k+1} - \rho^k}{2}-\frac{\rho^k}{2}&=\frac{c_\rho(k+1)^{\frac{1}{3}}-c_\rho k^{\frac{1}{3}}}{2} -\frac{c_\rho}{2}k^{\frac{1}{3}} \\
&\le\frac{c_\rho (k^{\frac{1}{3}}+\frac{1}{3}k^{-\frac{2}{3}})-c_\rho k^{\frac{1}{3}}}{2} -\frac{c_\rho}{2}k^{\frac{1}{3}}\\
&\le \frac{c_\rho}{6}k^{-\frac{2}{3}}-\frac{c_\rho}{2}k^{\frac{1}{3}}\\
&\le-\frac{c_\rho}{4}k^{\frac{1}{3}}.
\end{align*}

As for the noise term, the sum is logarithmically bounded:
\[\sum_{k=1}^K\frac{2(a^{k+1})^2 \sigma^2}{\Gamma^{k+1}} \le 2\sigma^2c_a^2c_\gamma\left (1+ \ln{K} \right ).\]

Substituting all the derived bounds back into \eqref{eq:sum_lyapunov}, we obtain
\begin{align*}
&\Phi^{K+1} - \Phi^1 \\\le & - \lambda_{\min}(C_\mathbf{x}) k^{1/3} \|\Delta \mathbf{x}^{k+1}\|^2  - \sum_{k=1}^KC_{\mathcal{E}} k^{-\frac{1}{3}} \|\mathcal{E}^k\|^2  \\
-&\sum_{k=1}^K\frac{c_\rho}{4} k^{\frac{1}{3}}\|\mathbf{r}^{k+1}\|^2 + \frac{\rho^1 -\rho^0}{2}\|\mathbf{r}^1\|^2 + 2\sigma^2c_a^2c_\gamma\left (1+ \ln{K} \right ).
\end{align*}
It follows from Assumption \ref{Ass: f,g lower bounded} that there exists a lower bound $\Phi^{\star}$ for the sequence $\{\Phi^k\}$. By moving the negative summation terms to the left-hand side and replacing $\Phi^{K+1}$ with $\Phi^{\star}$, we deduce
\begin{align*}
&\sum_{k=1}^K  \bigg( C_{\mathcal{E}} k^{-1/3} \|\mathcal{E}^k\|^2 + \lambda_{\min}(C_\mathbf{x}) k^{1/3} \|\Delta \mathbf{x}^{k+1}\|^2 \\
&\qquad + \frac{c_\rho}{4} k^{1/3} \|\mathbf{r}^{k+1}\|^2  \bigg )\\
&\le \Phi^1 -\Phi^\star + \frac{\rho^1 -\rho^0}{2}\|\mathbf{r}^1\|^2+ 2\sigma^2c_a^2c_\gamma\left (1+ \ln{K} \right ),
\end{align*}
Dividing both sides by the minimum coefficient, we finally establish the accumulation bound \eqref{eq:accumulation bound}.
$\hfill \blacksquare$
\subsection{Proof of Theorem \ref{Convergence rate theorem}}\label{appendix: Convergence rate theorem}
Based on the definition of an $\epsilon$-stationary point (Definition \ref{def:epsilon_stationary_point}) and the subgradient of the Lagrangian function \eqref{subdifferential_of_Lagrangian}, the squared distance can be deposed as:
\begin{align*}
&\mathbb{E}\left[ \text{dist}^2\left( 0,\partial \mathcal{L}\left( \mathbf {x},\mathbf {y},\bm{\lambda}  \right) \right) \right] \\
&= \mathbb{E}\left[ \| \nabla F\left( \mathbf {x}\right)  -A^{\mathrm{T}}\bm{\lambda}  \| ^2\right] + \mathbb{E}\left[\text{dist}^2 \left( \partial H\left( \mathbf {x} \right) , B^{\mathrm{T}}\bm{\lambda} \right)  \right]\\
&\quad + \mathbb{E}\left[ \lVert A\mathbf {x}+B\mathbf {y} \rVert ^2 \right],
\end{align*}

We proceed to bound each of these three terms separately.\\
\textbf{1. Bound on gradient stationarity:} From the first-order optimality condition  (\ref{x_optimality_rewrite}), we have
\begin{align*}
\nabla F(\mathbf{x}^k) - A^\mathrm{T} {\boldsymbol{\lambda}}^k = -\mathcal{E}^k -\rho^k A^\mathrm{T} \mathbf{r}^{k+1} - S^k \Delta \mathbf{x}^{k+1}.
\end{align*}
Applying the basic inequality $\|a+b+c\|^2 \le 3\|a\|^2 + 3\|b\|^2 + 3\|c\|^2$, we bound the squared norm:
\begin{align*}
&\|\nabla F(\mathbf{x}^k) - A^\mathrm{T} {\boldsymbol{\lambda}}^k\|^2 \\
&\le 3 \|\mathcal{E}^k\|^2 +  3 (\rho^k)^2 \|A^\mathrm{T}\|^2 \|\mathbf{r}^{k+1}\|^2+ 3 \|S^k\|_2^2 \|\Delta \mathbf{x}^{k+1}\|^2,
\end{align*}
Substituting the explicit parameter settings $\rho^k = c_\rho k^{1/3}$ and $S^k = Q^k - \rho^k A^\mathrm{T} A= k^{1/3} \left( C_\eta - c_\rho A^\mathrm{T} A \right)$, we obtain the explicit bounds for the coefficients:
\[3 (\rho^k)^2 \|A^\mathrm{T}\|^2 =3\lambda_{\max}(A^\mathrm{T} A)c_\rho^2 k^{\frac{2}{3}},\]
\begin{align*}
3\|S^k\|_2^2 &=3 k^{2/3} \left\| C_\eta - c_\rho A^\mathrm{T} A \right\|_2^2.
\end{align*}
Thus,
\begin{align*}
&\|\nabla F(\mathbf{x}^k) - A^\mathrm{T} {\boldsymbol{\lambda}}^k\|^2 \\&\le C_5 \left( \|\mathcal{E}^k\|^2 + k^{\frac{2}{3}}\|\Delta \mathbf{x}^{k+1}\|^2+  k^{\frac{2}{3}}\|\mathbf{r}^{k+1}\|^2\right),
\end{align*}
where $C_5 = \max \left\{ 3,\ 3 \left\| C_\eta - c_\rho A^\mathrm{T} A \right\|_2^2, \ 3\lambda_{\max}(A^\mathrm{T} A)c_\rho^2  \right\}$.

Summing this inequality from $k=1$ to $K$ and utilizing the accumulation bound in Theorem \ref{potential function accumulation}, we have
\begin{align*}
&\sum_{k=1}^K\|\nabla F(\mathbf{x}^k) - A^\mathrm{T} {\boldsymbol{\lambda}}^k\|^2 \\&\le \sum_{k=1}^K C_5 k^\frac{1}{3}\left( k^{-\frac{1}{3}}\|\mathcal{E}^k\|^2 + k^\frac{1}{3} \|\Delta \mathbf{x}^{k+1}\|^2 + k^\frac{1}{3} \|\mathbf{r}^{k+1}\|^2 \right)\\
&\le C_5 \frac{\Phi^{1} - \Phi^{\star} + \frac{\rho^1 -\rho^0}{2}\|\mathbf{r}^1\|^2+ 2\sigma^2c_a^2c_\gamma\left (1+ \ln{K} \right )}{\min \left (C_{\mathcal{E}},\lambda_{\min}(C_\mathbf{x}),\frac{c_\rho}{4} \right )} K^\frac{1}{3}.
\end{align*}
To simplify the presentation, we define the constants $\mathcal{M}_1$ and $\mathcal{M}_2$ as follows:
\begin{align*}
\mathcal{M}_1 &\triangleq \frac{\Phi^{1} - \Phi^{\star} + \frac{\rho^1 -\rho^0}{2}\|\mathbf{r}^1\|^2 + 2\sigma^2c_a^2c_\gamma}{\min \left (C_{\mathcal{E}},\lambda_{\min}(C_\mathbf{x}),\frac{c_\rho}{4} \right )},\\   
\mathcal{M}_2 &\triangleq \frac{2\sigma^2c_a^2c_\gamma}{\min \left (C_{\mathcal{E}},\lambda_{\min}(C_\mathbf{x}),\frac{c_\rho}{4} \right )}.
\end{align*}
\textbf{2. Bound on subgradient stationarity:} Reviewing the update process of $\mathbf{y}$ in (\ref{y_update_compact}), the first-order optimality condition implies the existence of a subgradient $\mathbf{g} \in \partial H(\mathbf{y}^k)$ such that:
\[\mathbf{g} - B^\mathrm{T} {\boldsymbol{\lambda}}^{k-1} + \rho^{k-1} B^\mathrm{T} (A\mathbf{x}^{k-1} + B\mathbf{y}^{k}) = 0.\]
Substituting the dual update relation (\ref{update lambda compact}), we obtain
\begin{align*}
B^\mathrm{T} {\boldsymbol{\lambda}}^k - \mathbf{g} &= B^\mathrm{T} \left[ {\boldsymbol{\lambda}}^{k-1} - \rho^{k-1} (A\mathbf{x}^{k} + B\mathbf{y}^{k}) \right] \\
&\quad - B^\mathrm{T} \left[ {\boldsymbol{\lambda}}^{k-1} - \rho^{k-1} (A\mathbf{x}^{k-1} + B\mathbf{y}^{k}) \right] \\
&= -\rho^{k-1} B^\mathrm{T} A \Delta \mathbf{x}^{k}.
\end{align*}
Taking the squared norm and using $\rho^{k-1} \le \rho^k$ yield the explicit bound:
\begin{align*}
\text{dist}^2(B^\mathrm{T} {\boldsymbol{\lambda}}^k, \partial H(\mathbf{y}^k))&=\| B^\mathrm{T} {\boldsymbol{\lambda}}^k - \mathbf{g} \|^2 \\
&\le   (\rho^k)^2 \|B\| ^2\|A\|^2\|\Delta \mathbf{x}^{k}\|^2. 
\end{align*}
Therefore, the accumulation bound is
\begin{align*}
&\sum_{k=1}^K\text{dist}^2(B^\mathrm{T} {\boldsymbol{\lambda}}^k, \partial H(\mathbf{y}^k))\\
&\le \sum_{k=1}^K c_\rho^2 \|B\| ^2\|A\|^2 k^\frac{1}{3}\left(  k^\frac{1}{3} \|\Delta \mathbf{x}^{k}\|^2  \right)\\
&\le c_\rho^2 \|B\| ^2\|A\|^2(\mathcal{M}_1 + \mathcal{M}_2 \ln K) K^{1/3}.
\end{align*}
\textbf{3. Bound on residual error:} Similarly, adjusting the summation index to utilize Theorem \ref{potential function accumulation}, we have
\begin{align*}
&\sum_{k=1}^K\|A\mathbf{x}^k + B\mathbf{y}^k\|^2=\sum_{k=1}^K\|\mathbf{r}^k\|^2 \le \sum_{k=1}^K k^{\frac{1}{3}}\|\mathbf{r}^k\|^2 \\
&\le \frac{\Phi^{1} - \Phi^{\star} +\frac{\rho^1 -\rho^0}{2}\|\mathbf{r}^1\|^2+ 2\sigma^2c_a^2c_\gamma\left (1+ \ln{K} \right )}{\min \left (C_{\mathcal{E}},\lambda_{\min}(C_\mathbf{x}),\frac{c_\rho}{4} \right )}\\
&\le \mathcal{M}_1 + \mathcal{M}_2 \ln K.
\end{align*}
\textbf{4. Global Convergence Rate:}
Combining all three components and averaging over $K$ iterations, we characterize the optimal stationarity measure:
\begin{align*}
&\min_{1\le k \le K} \mathbb{E}\left[ \mathrm{dist}^2 \left(0, \partial \mathcal{L}(\mathbf{x}^k, \mathbf{y}^k, \boldsymbol{\lambda}^k)\right) \right]\\
&\le \frac{1}{K}\sum_{k=1}^K \mathbb{E}\left[ \mathrm{dist}^2 \left(0, \partial \mathcal{L}(\mathbf{x}^k, \mathbf{y}^k, \boldsymbol{\lambda}^k)\right) \right] \\
&= \frac{1}{K}\sum_{k=1}^K \bigg\{ \mathbb{E}\left\| \nabla F(\mathbf{x}^k) - A^\mathrm{T} \boldsymbol{\lambda}^k \right\|^2\\
 &\qquad + \mathbb{E}\left[ \mathrm{dist}^2\left(B^\mathrm{T} \boldsymbol{\lambda}^k, \partial H(\mathbf{y}^k)\right) \right] 
 + \mathbb{E}\left\| A\mathbf{x}^k + B\mathbf{y}^k \right\|^2 \bigg\}\\
&=S_1 K^{-\frac{2}{3}}+S_2 K^{-\frac{2}{3}} +S_3 K^{-1},
\end{align*}
where the dominant term coefficients are
\begin{align*}
S_1 &=C_5 (\mathcal{M}_1 + \mathcal{M}_2 \ln K),\\
S_2 &= c_\rho^2 \|B\| ^2\|A\|^2(\mathcal{M}_1 + \mathcal{M}_2 \ln K), \\
S_3 &=  \mathcal{M}_1 + \mathcal{M}_2 \ln K .
\end{align*}
This establishes the optimal convergence rate of $\mathcal{O}(K^{-2/3})$ (or equivalently $\mathcal{O}(\epsilon^{-1.5})$ oracle complexity).
$\hfill \blacksquare$


\bibliographystyle{IEEEtran}
\bibliography{reference_zym.bib}

\begin{IEEEbiography}
[{\includegraphics[width=1in,height=1.25in,clip,keepaspectratio]{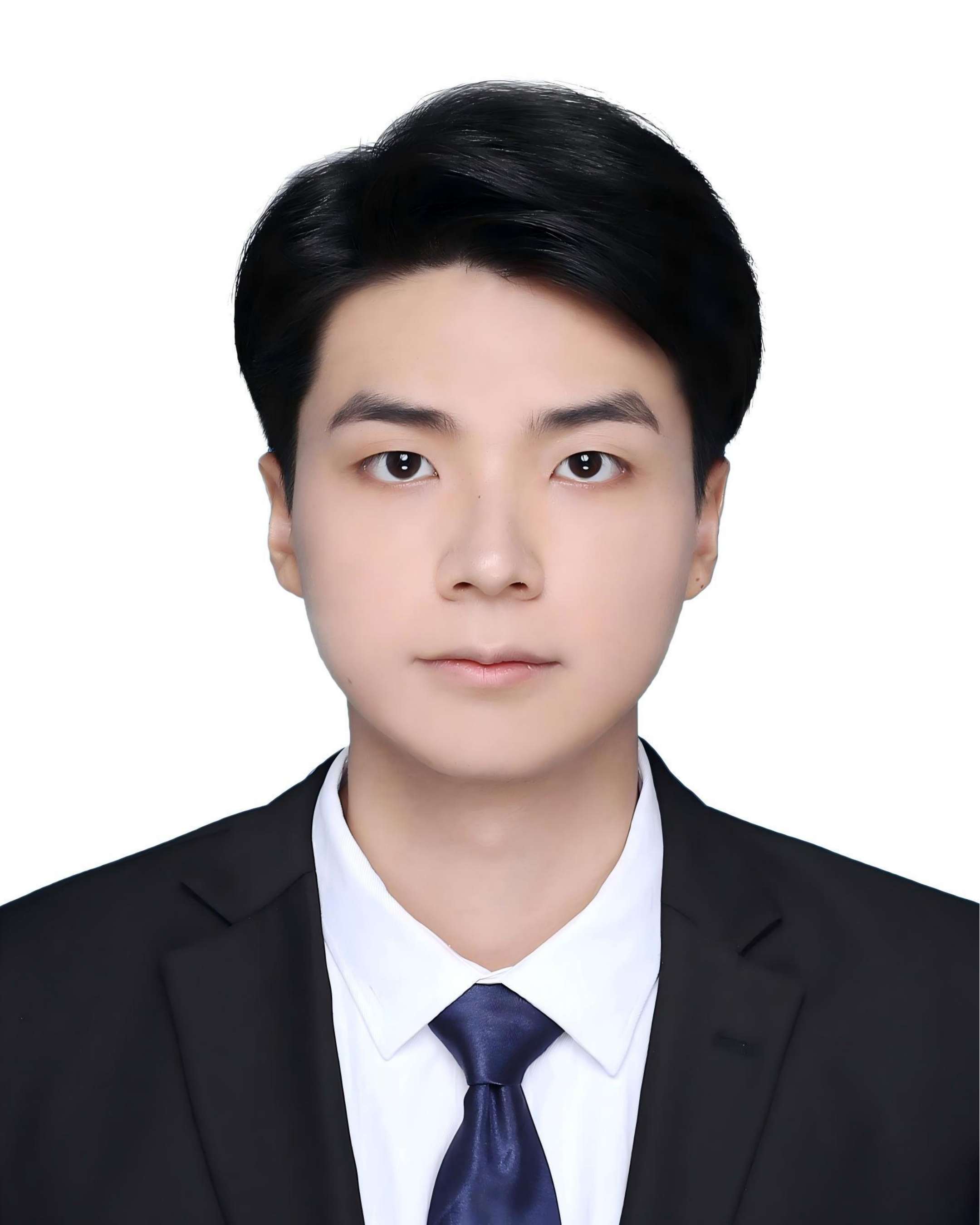}}]{Yangming Zhang}
received the B.S. degree in underwater acoustic engineering and the M.E. degree in control science and engineering from Northwestern Polytechnical University, Xi'an, China, in 2020 and 2023, respectively. He is currently working toward the Ph.D degree in Sun Yat-sen University. His current research interests include distributed optimization and control, machine learning, and multi-robot collaboration.
\end{IEEEbiography}

\begin{IEEEbiography}
[{\includegraphics[width=1in,height=1.25in,clip,keepaspectratio]{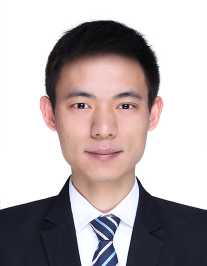}}]{Yongyang Xiong}
received the B.S. degree in information and computational science, the M.E. and Ph.D. degrees in control science and engineering from Harbin Institute of Technology, Harbin, China, in 2012, 2014, and 2020, respectively. From 2017 to 2018, he was a joint Ph.D. student with the School of Electrical and Electronic Engineering, Nanyang Technological University, Singapore. From 2021 to 2024, he was a Postdoctoral Fellow with the Department of Automation, Tsinghua University, Beijing, China. Currently, he is an associate professor with the School of Intelligent Systems Engineering, Sun Yat-sen University, Shenzhen, China. His current research interests include networked control systems, distributed optimization and learning, multi-agent reinforcement learning and their applications.
\end{IEEEbiography}

\begin{IEEEbiography}
[{\includegraphics[width=1in,height=1.25in,clip,keepaspectratio]{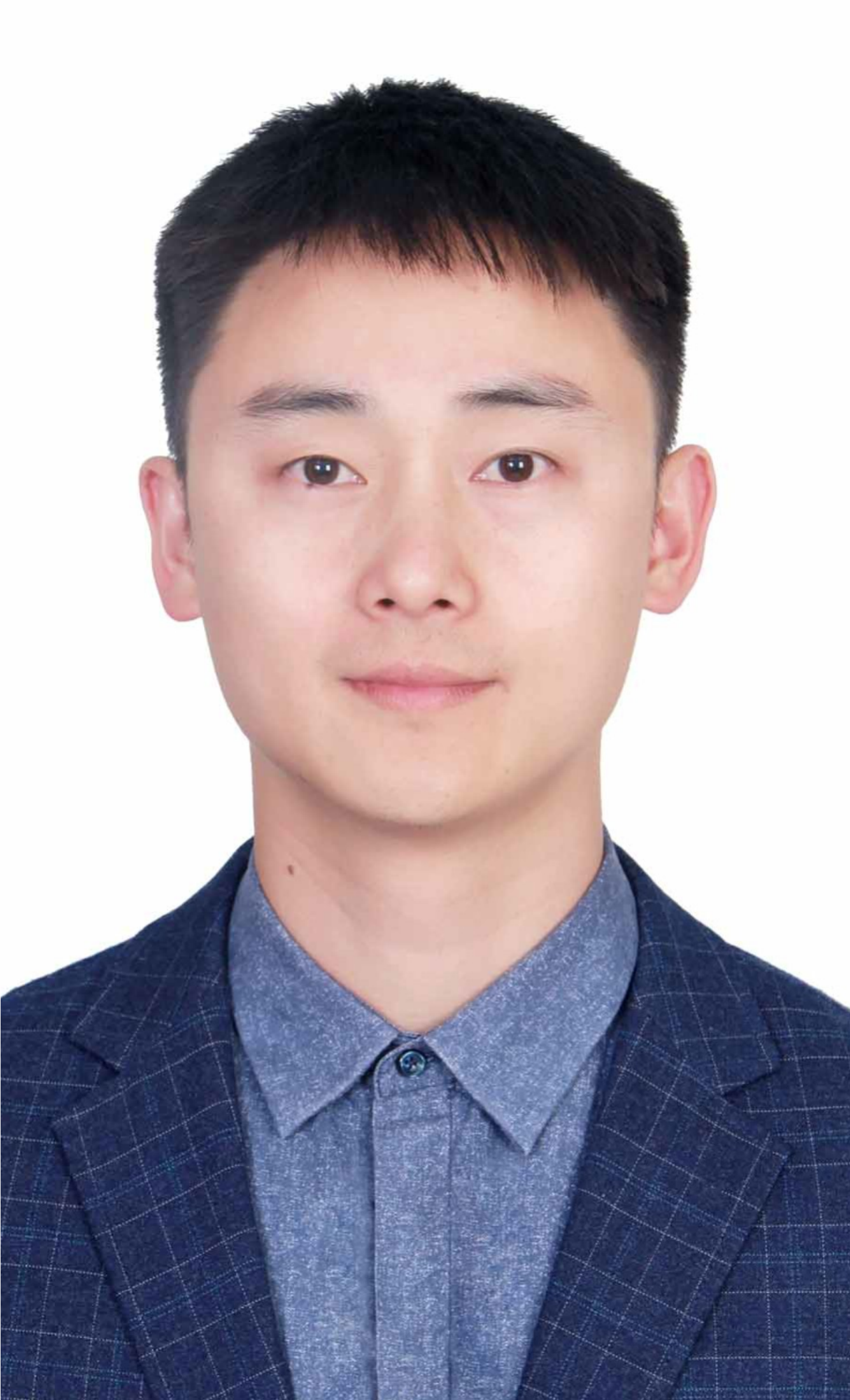}}]{Jinming Xu} received the B.S. degree in Mechanical Engineering from Shandong University,
	China, in 2009 and the Ph.D. degree in Electrical and Electronic Engineering from Nanyang
	Technological University (NTU), Singapore, in
	2016. From 2016 to 2017, he was a Research
	Fellow at the EXQUITUS center, NTU; he was
	also a postdoctoral researcher in the Ira A.
	Fulton Schools of Engineering, Arizona State
	University, from 2017 to 2018, and the School of
	Industrial Engineering, Purdue University, from
	2018 to 2019, respectively. In 2019, he joined Zhejiang University,
	China, where he is currently a Professor with the College of Control
	Science and Engineering. His research interests include distributed
	optimization and control, machine learning and network science. He
	has published over 50 peer-reviewed papers in prestigious journals
	and leading conferences. He has been the Associate Editor of IEEE
	TRANSACTIONS on SIGNAL and INFORMATION PROCESSING OVER NETWORKS.
\end{IEEEbiography}
\begin{IEEEbiography}
	[{\includegraphics[width=1in,height=1.25in,clip,keepaspectratio]{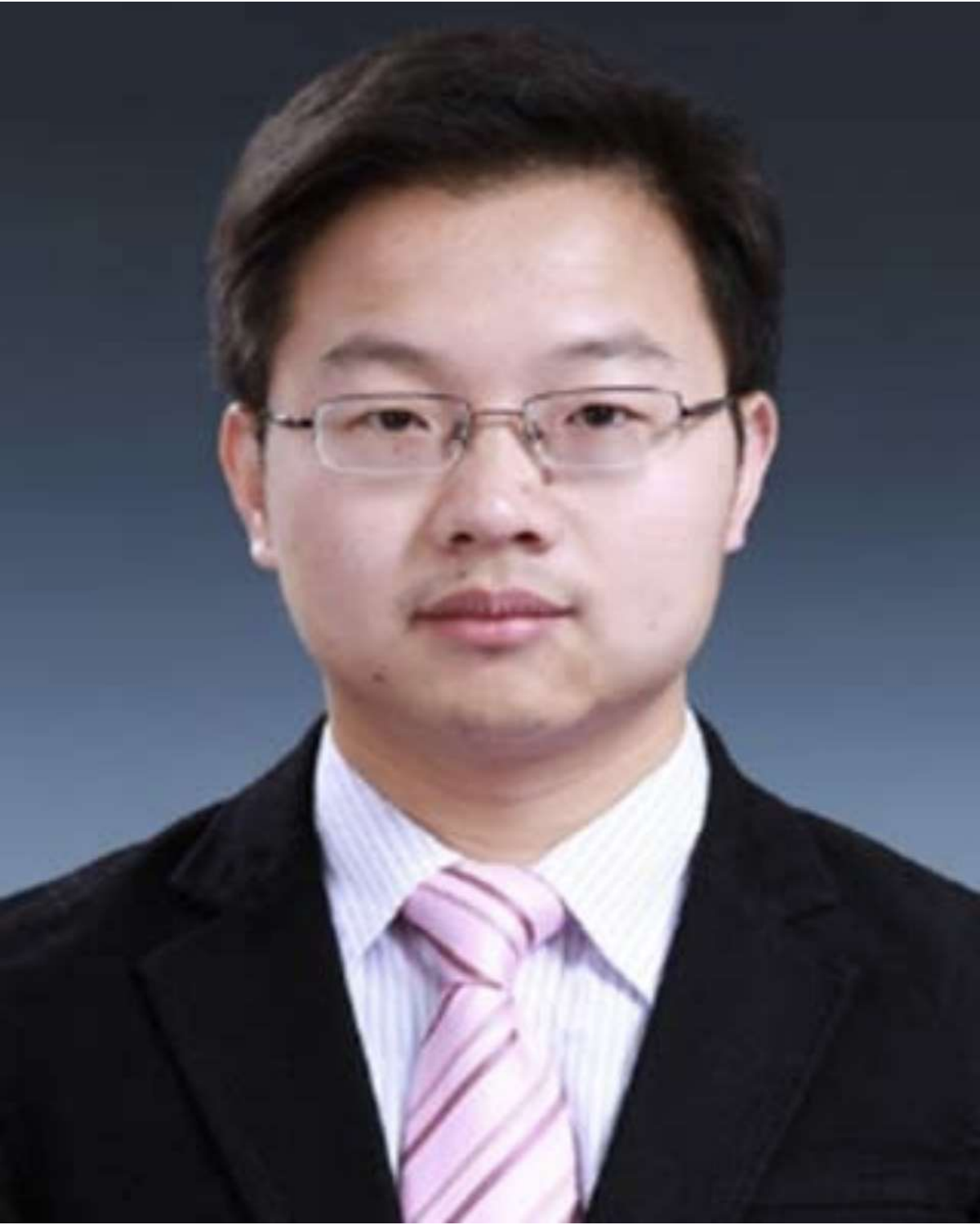}}]{Keyou You}
	(Senior Member, IEEE) received the B.S. degree in statistical science from Sun Yat-sen University, Guangzhou, China, in 2007 and the Ph.D. degree in electrical and electronic engineering from Nanyang Technological University (NTU), Singapore, in 2012. 
	
	After briefly working as a Research Fellow at NTU, he joined Tsinghua University, Beijing, China, where he is currently a Full Professor in the Department of Automation. He held visiting positions with Politecnico di Torino, Turin, Italy, Hong Kong University of Science and Technology, Hong Kong, China, University of Melbourne, Melbourne, Victoria, Australia, and so on. His research interests include the intersections between control, optimization and learning, as well as their applications in autonomous systems. 
	
	Dr. You received the Guan Zhaozhi Award at the 29th Chinese Control Conference in 2010 and the ACA (Asian Control Association) Temasek Young Educator Award in 2019. He received the National Science Funds for Excellent Young Scholars in 2017 and for Distinguished Young Scholars in 2023. He is currently an Associate Editor for \textit{Automatica} and IEEE TRANSACTIONS ON CONTROL OF NETWORK SYSTEMS.
\end{IEEEbiography}
\begin{IEEEbiography}
	[{\includegraphics[width=1in,height=1.25in,clip,keepaspectratio]{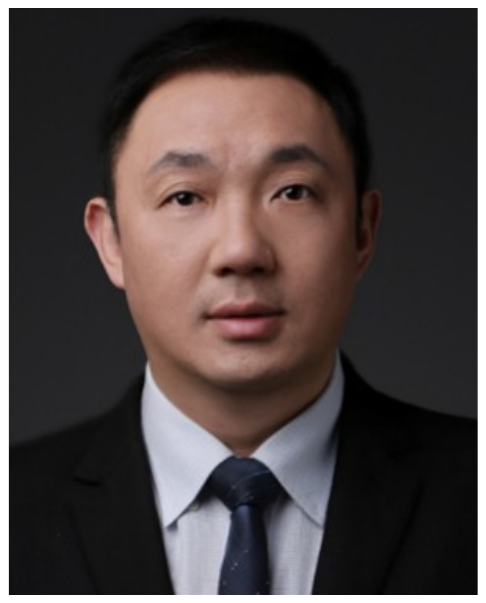}}]{Yang Shi} (Fellow, IEEE) 
	received the B.Sc. and Ph.D. degrees in mechanical engineering and
	automatic control from Northwestern Polytechnical University, Xi’an, China, in 1994 and 1998, respectively, and the Ph.D. degree in electrical and computer engineering from the University of Alberta, Edmonton, AB, Canada, in 2005. He was a Research Associate with the Department of Automation, Tsinghua University, China, from 1998 to 2000. From 2005 to 2009, he was an Assistant Professor and an Associate Professor with the Department of Mechanical Engineering, University of Saskatchewan, Saskatoon, SK, Canada. In 2009, he joined the University of Victoria, and currentlu he is a Professor with the Department of Mechanical Engineering, University of Victoria, Victoria, BC, Canada. His current research interests include networked and distributed systems, model predictive control (MPC), cyber-physical systems (CPS), robotics and mechatronics, navigation and control of autonomous systems (AUV and UAV), and energy system applications.
	
	Dr. Shi is the IFAC Council Member. He is a fellow of ASME, CSME,
	Engineering Institute of Canada (EIC), Canadian Academy of Engineering (CAE), Royal Society of Canada (RSC), and a registered Professional Engineer in British Columbia and Canada. He received the University of Saskatchewan Student Union Teaching Excellence Award in 2007, the Faculty of Engineering Teaching Excellence Award in 2012 at the University of Victoria (UVic), and the 2023 REACH Award for Excellence in Graduate Student Supervision and Mentorship. On research, he was a recipient of the JSPS Invitation Fellowship (short-term) in 2013, the UVic Craigdarroch Silver Medal for Excellence in Research in 2015, the Humboldt Research Fellowship for Experienced Researchers in 2018, CSME Mechatronics Medal in 2023, the IEEE Dr.-Ing. Eugene Mittelmann Achievement Award in 2023,
	the 2024 IEEE Canada Outstanding Engineer Award. He was a Vice-President on Conference Activities of IEEE IES from 2022 to 2025 and the Chair of IEEE IES Technical Committee on Industrial Cyber-Physical Systems. Currently, he is the Editor-in-Chief of IEEE TRANSACTIONS ON INDUSTRIAL ELECTRONICS. He also serves as Associate Editor for \textit{Automatica}, IEEE TRANSACTIONS ON AUTOMATIC CONTROL, and \textit{Annual Review in Controls}.
\end{IEEEbiography}

\end{document}